\newcommand{\rk}{{\rm rk}}
\newtheorem{lemma1}{}[section]
\newenvironment{lemma}{\begin{lemma1}{\bf Lemma.}}{\end{lemma1}}
\newenvironment{example}{\begin{lemma1}{\bf Example.}\rm}{\end{lemma1}}
\newenvironment{theorem}{\begin{lemma1}{\bf Theorem.}}{\end{lemma1}}
\newenvironment{proposition}{\begin{lemma1}{\bf Proposition.}}{\end{lemma1}}
\newenvironment{corollary}{\begin{lemma1}{\bf Corollary.}}{\end{lemma1}}
\newenvironment{remark}{\begin{lemma1}{\bf Remark.}\rm}{\end{lemma1}}
\newenvironment{definition}{\begin{lemma1}{\bf Definition.}}{\end{lemma1}}
\newenvironment{remark*}{{\bf Remark.}}{}
\newenvironment{example*}{{\bf Example.}}{}
\newcommand{\Z}{\ensuremath{\mathbb{Z}}}
\newcommand{\C}{\ensuremath{\mathbb{C}}}
\newcommand{\PP}{\ensuremath{\mathbb{P}}}
\newcommand{\holom}[3]{\ensuremath{#1:#2  \rightarrow #3}}
\newcommand{\fibre}[2]{\ensuremath{#1^{-1} (#2)}}
\newcommand\sF{{\mathcal F}}
\newcommand\sI{{\mathcal I}}
\newcommand\sO{{\mathcal O}}
\DeclareMathOperator*{\sing}{sing}
\DeclareMathOperator*{\Nm}{Nm}
\newcommand\pic{\ensuremath{\mbox{Pic} \ }}
\newcommand\picd{\ensuremath{\mbox{Pic}}}
\newcommand{\ppav}{principally polarised abelian variety}
\newcommand{\ppavs}{principally polarised abelian varieties}
\newcommand{\upC}{\widetilde{C}}
\newcommand{\pull}{(\pi_1^*, \pi_2^*)}
\title{Geometry of Brill-Noether loci on Prym varieties} 
\date{March 5, 2011}
\author{Andreas H\"oring}
\subjclass[2000]{14H40, 14H51, 14K12}
\keywords{Prym variety, Brill-Noether theory, bielliptic curves}
\thanks{Partially supported by the A.N.R. project ``CLASS''}
\address{Andreas H\"oring, Universit\'{e} Pierre et Marie Curie and Albert-Ludwig Universit\"at Freiburg}
\curraddr{Mathematisches Institut, Albert-Ludwigs-Universit\"at
  Freiburg, Eckerstra{\ss}e 1, 79104 Freiburg im Breisgau, Germany}
\email{hoering@math.jussieu.fr}
\begin{document}

\begin{abstract}
Given the Prym variety of an \'etale double cover
one can define analogues of the classical Brill-Noether loci on Jacobians of curves.
Recent work by Lahoz and Naranjo shows that the Brill-Noether locus $V^2$ completely
determines the covering. In this paper we describe the singular locus and
the irreducible components of $V^2$.
\end{abstract}

\maketitle

\section{Introduction}

Given a smooth curve $X$ it is well-known that the
Brill-Noether loci $W^r_d X$ contain a lot of interesting information 
about the curve $X$ and its polarised Jacobian $(JX, \Theta_X)$.
Given a smooth curve $C$ and an \'etale double cover \holom{\pi}{\tilde{C}}{C},
one can define analogously Brill-Noether loci $V^r$ for the Prym variety $(P, \Theta)$ (cf. Section \ref{sectionnotation}).
Several fundamental results on these loci are known for quite a while: 
the expected dimension is $g(C)-1-{r+1 \choose 2}$,
they are non-empty if the expected dimension is non-negative \cite[Thm.1.4]{Ber87}
and connected if the expected dimension is positive \cite[Ex.6.2]{Deb00}.
If $C$ is general in the moduli space of curves, all the Brill-Noether loci are smooth and
have the expected dimension \cite[Thm.1.11]{Wel85}.
While the Brill-Noether locus $V^1 \subset P^+$ is the canonically defined theta-divisor and has received the
attention of many authors, the study of the higher Brill-Noether loci and the information
they contain about the \'etale cover \holom{\pi}{\tilde{C}}{C} is a very recent subject:
Casalaina-Martin, Lahoz and Viviani \cite{CMLV08} have shown that $V^2$ is set-theoretically the theta-dual (cf. Definition \ref{definitionthetadual})
of the Abel-Prym curve. In their recent preprint Lahoz and Naranjo \cite{LN10} refine this statement
and prove a Torelli theorem: the Brill-Noether locus $V^2$ determines the covering $\upC \rightarrow C$.
This justifies a more detailed study of the geometry of $V^2$. 
Our first result is the

\begin{theorem} \label{maintheorem} 
Let $C$ be a smooth curve of genus $g(C) \geq 6$, and let \holom{\pi}{\tilde{C}}{C} be
an \'etale double cover such that the Prym variety $(P, \Theta)$ is an irreducible \ppav\footnote{The condition
on the irreducibility is always satisfied unless $C$ is hyperelliptic, but $\upC$ is not. In this case  $(P, \Theta)$ is isomorphic
to a product of Jacobians \cite{Mum74b}.}. 

a) Suppose that $C$ is hyperelliptic. Then $V^2$ is irreducible of dimension $g(C)-3$.

b) Suppose that $C$ is not hyperelliptic. Then $V^2$ is a reduced Cohen-Macaulay scheme of dimension $g(C)-4$.
If the singular locus $V^2_{\sing}$ has an irreducible component of dimension at least $g(C)-5$, then $C$ is a plane quintic, trigonal or bielliptic.
\end{theorem}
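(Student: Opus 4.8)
The plan is to use two descriptions of $V^2$ in tandem. On the one hand, $V^2=\{L\in\mathrm{Pic}^{2g-2}(\tilde C):\mathrm{Nm}(L)=K_C,\ h^0(\tilde C,L)\geq 3\}$ (the parity of $h^0$ being automatically odd) is a Prym-Brill-Noether locus, hence, inside a torsor under $P$, the degeneracy locus of a morphism of vector bundles of the expected ranks; from this I get for free that every component of $V^2$ has dimension $\geq g-4$, that $V^2$ is Cohen-Macaulay wherever $\dim V^2=g-4$, that $V^4=\{h^0\geq 5\}\subseteq V^2_{\sing}$, and, using $\sigma^{*}L\cong K_{\tilde C}\otimes L^{-1}$ for $L\in V^2$, that the Zariski tangent space at an $L$ with $h^0(L)=3$ has dimension $g-4+\dim\ker\mu_L$, where $\mu_L\colon\wedge^2 H^0(\tilde C,L)\to H^0(C,K_C\otimes\eta)$ is the Prym-Petri map (the multiplication map $H^0(\tilde C,L)\otimes H^0(\tilde C,K_{\tilde C}\otimes L^{-1})\to H^0(\tilde C,K_{\tilde C})$ cut down to anti-invariants, cf. Bertram \cite{Ber87}). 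So $V^2$ is smooth of dimension $g-4$ at such an $L$ exactly when $\mu_L$ is injective, and once $\dim V^2=g-4$ is known we get $V^2_{\sing}=V^4\cup\{L:h^0(L)=3,\ \mu_L\text{ not injective}\}$. On the other hand, $V^2$ is the theta-dual of the Abel-Prym curve $\varphi\colon\tilde C\to P$ \cite{CMLV08}, which is an embedding when $C$ is not hyperelliptic and is two-to-one onto its image when $C$ is hyperelliptic.

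For part a), if $C$ is hyperelliptic then, since $(P,\Theta)$ is an irreducible \ppav, $\tilde C$ is hyperelliptic as well. Its hyperelliptic involution $\tau$ is central in $\mathrm{Aut}(\tilde C)$, hence commutes with the deck involution $\sigma$; the group $\langle\sigma,\tau\rangle\cong(\mathbb{Z}/2\mathbb{Z})^2$ has quotients $C$, $\mathbb{P}^1=\tilde C/\tau$, and a fourth curve $C'=\tilde C/\langle\sigma\tau\rangle$. Riemann-Hurwitz bookkeeping for $(\mathbb{Z}/2\mathbb{Z})^2$-covers gives $g(C')=g-1$, shows $C'$ is hyperelliptic, and identifies $(P,\Theta)\cong(JC',\Theta_{C'})$ \cite{Mum74b}. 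The Abel-Prym map $c\mapsto\mathcal{O}_{\tilde C}(c-\sigma c)$ factors through $\tilde C\to C'$ and exhibits $C'$ as a translate of the Abel-Jacobi curve inside $JC'$; hence $V^2$ is the theta-dual of the Abel-Jacobi curve, i.e.\ a translate of $W_{g-3}(C')$, which for the hyperelliptic curve $C'$ of genus $g-1$ is irreducible of dimension $g-3$. This proves a).

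From now on $C$ is not hyperelliptic; then $\varphi$ is an embedding and $\tilde C$ is a non-hyperelliptic curve of genus $2g-1\geq 11$ (an \'etale double cover of a non-hyperelliptic curve is non-hyperelliptic). The first step is to prove $\dim V^2=g-4$; the lower bound is automatic, and for the upper bound I would run a dimension count on the incidence correspondence between $\tilde C$ and $V^2=T(\varphi(\tilde C))$ --- equivalently on the Brill-Noether geometry of the $g^{2}_{2g-2}$'s on $\tilde C$ with norm $K_C$ inside the coset $\mathrm{Nm}^{-1}(K_C)$ --- using crucially that $\varphi$ is an embedding and $\tilde C$ is non-hyperelliptic; pushing the crude bound $\dim V^2\leq g-2$ down to the sharp $\dim V^2\leq g-4$ for \emph{every} non-hyperelliptic $C$ is the first technical hurdle. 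Granting $\dim V^2=g-4$, the scheme is everywhere Cohen-Macaulay by the degeneracy locus structure, and it is reduced because at a general point $L$ of each component one has $h^0(L)=3$ and $\mu_L$ injective, so $V^2$ is Cohen-Macaulay and generically reduced.

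It remains to bound $V^2_{\sing}=V^4\cup\{L:h^0(L)=3,\ \mu_L\text{ not injective}\}$. The contribution of $V^4$ is controlled by a Martens-Mumford type estimate and stays well below dimension $g-5$ unless $C$ is already special, so the crux is: if $Z:=\{L\in V^2:h^0(L)=3,\ \mu_L\text{ not injective}\}$ has a component of dimension $\geq g-5$, then $C$ is a plane quintic, trigonal or bielliptic. Unwinding $\ker\mu_L\neq 0$ produces, for each such $L$, two independent sections $s,t\in H^0(\tilde C,L)$ with $s\cdot\sigma^{*}t=t\cdot\sigma^{*}s$ in $H^0(K_{\tilde C})$, i.e.\ an equality of canonical divisors $\mathrm{div}(s)+\sigma(\mathrm{div}(t))=\mathrm{div}(t)+\sigma(\mathrm{div}(s))$; one checks that such a "Prym-Petri relation" corresponds to a secant (or trisecant) line of the Prym-canonical model $C\hookrightarrow\mathbb{P}^{g-2}$ lying on the curve. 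A $(g-5)$-dimensional family of such lines is incompatible with the Prym-canonical curve being projectively normal and cut out by quadrics, and a Babbage-Enriques-Petri type analysis of Prym-canonical curves then forces $C$ into one of the three exceptional classes. This last implication --- a Prym analogue of the Enriques-Petri theorem, combined with the dimension estimate for $Z$ --- is the main obstacle; it is also where the exceptional geometry enters transparently, since excluding the hyperelliptic case is precisely what makes the Prym-canonical map birational onto its image, and the three exceptions are exactly the non-hyperelliptic curves whose Prym-canonical model fails to be cut out by quadrics (trigonal, plane quintic) or fails to be birationally embedded (bielliptic with the relevant $\eta$).
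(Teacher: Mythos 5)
Your part a) is essentially the paper's argument (identify $(P,\Theta)$ with $(JC',\Theta_{C'})$ for $C'=\upC/\langle\sigma\tau\rangle$ and recognise $V^2$ as a translate of $T(C')\simeq W^0_{g-3}C'$), and your structural observations in part b) --- determinantal structure, Cohen--Macaulay once the dimension is right, reduced because generically reduced, and $V^2_{\sing}=V^4\cup\{L: h^0(L)=3,\ \mu_L\ \text{not injective}\}$ --- also match the paper. But the two steps you yourself flag as ``hurdles'' are precisely the content of the theorem, and neither is carried out. For the bound $\dim V^2\leq g-4$ you only gesture at ``a dimension count on the incidence correspondence''; the paper's Lemma \ref{lemmadimensionestimate} gets it from a specific input, namely Beauville's theorem that $\fibre{(\Nm\pi)}{|K_C|}\subset\upC^{(2g-2)}$ has exactly two irreducible components of dimension $g-1$ (this is where non-hyperellipticity enters, via the canonical embedding), together with the facts that a general $L\in P^-$ has $h^0(L)=1$ and that Abel--Jacobi fibres over $V^r$ have dimension at least $r$. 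Without some such input the crude bound $g-2$ does not improve, and note that $V^4$ is then automatically of dimension at most $g-6$, not merely ``unless $C$ is special''.

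More seriously, your treatment of the locus where $\mu_L$ is not injective goes off in a direction that is neither the paper's nor, as stated, a proof. From a decomposable element $s\wedge t\in\ker\mu_L$ (decomposability is automatic here only because $\dim\wedge^2H^0(\upC,L)=3$; in general one needs the cone-over-Grassmannian count of Lemma \ref{lemmatangentspace}) the efficient move is that $h=t/s$ is $\sigma$-invariant, hence descends to $C$ and gives $L\simeq\pi^*M\otimes F$ with $\dim|M|\geq 1$ and $F$ effective. The paper then parametrises such $L$ by pairs $(M,B)$ with $M\in W^1_dC$ and $B\in|K_C\otimes M^{\otimes-2}|$; Clifford plus Martens force $\dim W^1_dC=d-3$ for some $d\in\{3,4\}$, and Mumford's refinement of Martens' theorem yields exactly the list trigonal, plane quintic, bielliptic. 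Your alternative --- that the relation $s\,\sigma^*t=t\,\sigma^*s$ gives a secant line of the Prym-canonical model lying on the curve, that a $(g-5)$-dimensional family of such lines contradicts quadric generation of the Prym-canonical ideal, and that an Enriques--Babbage--Petri analogue for Prym-canonical curves produces the same three classes --- is unsubstantiated at every joint: no argument links $\ker\mu_L\neq0$ to lines on the Prym-canonical image, no such Prym--Petri theorem is stated or cited, and the exceptional loci for quadric generation of the Prym-canonical ideal are not a priori the Martens--Mumford exceptional loci that actually control $V^2_{\sing}$. As written, part b) is an outline with both load-bearing steps missing.
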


While in the hyperelliptic case (cf. Proposition \ref{propositionhyperelliptic}) the statement is a straightforward extension of \cite{CMLV08}, the non-hyperelliptic case is based on the following observation: 
if the singular locus of $V^2$ is large, the singularities are exceptional in the sense of \cite{Bea82b}.
This provides a link with certain Brill-Noether loci on $JC$.

An immediate consequence of the theorem is that
$V^2$ is irreducible unless $C$ is a plane quintic, trigonal or bielliptic (Corollary \ref{corollaryreducible}). 
The case of trigonal curves is very simple: $(P, \Theta)$ is isomorphic to a Jacobian $JX$ and $V^2$
splits into two copies of $W^0_{g(C)-4} X$. 
For a plane quintic $V^2$ is reducible if and only if $(P, \Theta)$
is isomorphic to the intermediate Jacobian of a cubic threefold;
in this case $V^2$ splits into two copies of the Fano surface $F$.
Note that the Fano surface $F$ and the Brill-Noether loci $W^0_d X$ are conjectured
to be the only subvarieties of \ppavs \ having the minimal cohomology class $[\frac{\Theta^{k}}{k!}]$ \cite{Deb95}. 
By \cite{dCP95} the cohomology class of $V^2$ is $[2 \frac{\Theta^{g(C)-4}}{(g(C)-4)!}]$,
a reducible $V^2$ provides thus an important test for this conjecture.
Our second result is the

\begin{theorem} \label{theoremirreduciblecomponents}
Let $C$ be a smooth non-hyperelliptic curve of genus $g(C) \geq 6$, and let \holom{\pi}{\tilde{C}}{C} be
an \'etale double cover. Denote by $(P, \Theta)$ the polarised Prym variety.
The Brill-Noether locus $V^2$ is reducible if and only if one of the following holds:

a) $C$ is trigonal;

b) $C$ is a plane quintic and $(P, \Theta)$  an intermediate Jacobian of a cubic threefold;

c) $C$ is bielliptic and the covering \holom{\pi}{\tilde{C}}{C} belongs to the family
$\mathcal R_{\mathcal B_{g(C), g(C_1)}}$ with $g(C_1) \geq 2$ (cf. Remark \ref{remarkbielliptic}).
Then $V^2$ has two or three irreducible components, but none of them has minimal cohomology class.
\end{theorem}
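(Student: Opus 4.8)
The plan is to split the statement into its two implications, treating the ``only if'' direction first by reducing to three families of curves with Theorem~\ref{maintheorem}. Since $g(C)\ge 6$, the expected dimension $g(C)-4$ of $V^2$ is positive, so $V^2$ is connected; by Theorem~\ref{maintheorem} it is moreover a reduced Cohen--Macaulay scheme of pure dimension $g(C)-4$, and a connected Cohen--Macaulay scheme of finite type over a field is connected in codimension one. Hence, if $V^2=A_1\cup\dots\cup A_k$ with $k\ge 2$ is the decomposition into irreducible components, the incidence graph of the $A_i$ is connected, so some $A_i\cap A_j$ has codimension one in $V^2$; as a point lying on two distinct components of a reduced scheme is singular, $V^2_{\sing}$ then has a component of dimension $g(C)-5$, and Theorem~\ref{maintheorem}\,b) forces $C$ to be trigonal, a plane quintic, or bielliptic. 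It then remains to decide, in each of these three families, exactly when $V^2$ is reducible.

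The trigonal case is immediate: Recillas' trigonal construction identifies $(P,\Theta)$ with a Jacobian $(JX,\Theta_X)$, $g(X)=g(C)-1$, and under this identification $V^2$ becomes the union of $W^0_{g(C)-4}X$ and a translate of it --- two distinct irreducible subvarieties, each of the minimal class $[\Theta_X^{3}/3!]$ --- so every trigonal cover gives a reducible $V^2$. For plane quintics ($g(C)=6$, $\dim V^2=2$) I would use the classical dictionary with cubic threefolds: if $(P,\Theta)$ is the intermediate Jacobian of a smooth cubic threefold $Y$, then two translates of the Fano surface $F$ of lines on $Y$ lie in $V^2$, and since $\dim F=2=\dim V^2$ they are its irreducible components; for a plane-quintic cover not of this type, $(P,\Theta)$ is not such an intermediate Jacobian, and the exceptional-singularity analysis behind Theorem~\ref{maintheorem}, sharpened for plane quintics, gives $\dim V^2_{\sing}\le 0<g(C)-5$, so $V^2$ is irreducible by connectedness in codimension one. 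This disposes of cases (a) and (b).

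The bielliptic case is the heart of the matter and the main obstacle. Here $C$ carries a degree-two map onto an elliptic curve $E$, and the double covers \holom{\pi}{\upC}{C} in question are precisely those of the family $\mathcal R_{\mathcal B_{g(C),g(C_1)}}$ assembled from the tower $\upC\to C\to E$ together with the intermediate bielliptic curve $C_1$ of Remark~\ref{remarkbielliptic}, the invariant $g(C_1)$ recording how the branch points of $C\to E$ are distributed. For such a cover I would write down explicit sub-loci of $V^2$ of dimension $g(C)-4$ --- images of pull-backs along the maps of the tower, together with Brill--Noether loci on $C_1$ and on the elliptic quotients --- and then, using the cohomology class of $V^2$ from \cite{dCP95} and the reducedness of $V^2$, check that these sub-loci already exhaust $V^2$, leaving no embedded or lower-dimensional pieces; a case analysis of the possible degenerations of the bielliptic data should then produce three components in general and two when one elliptic contribution collapses. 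Running the same bookkeeping for bielliptic covers outside this family (in particular those with $g(C_1)\le 1$) should give $\dim V^2_{\sing}<g(C)-5$, hence an irreducible $V^2$, which completes the ``only if'' direction.

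It remains, in case (c), to show that none of the two or three components carries the minimal class $[\Theta^{3}/3!]$. Here I would compute the cohomology class of each component directly from the tower construction and verify that it is not a rational multiple of $\Theta^{3}$; this is consistent with the components' classes summing to $[V^2]=2\,\Theta^{3}/3!$ (twice the minimal class in codimension three), and in the three-component case part of it is forced already, since three subvarieties of minimal class would have total class $3\,\Theta^{3}/3!\ne 2\,\Theta^{3}/3!$. The delicate points throughout are: the explicit construction of the sub-loci and the verification that they fill $V^2$ \emph{scheme-theoretically}, for which the Cohen--Macaulayness and reducedness of Theorem~\ref{maintheorem} are indispensable; the exact component count in each degeneration of the bielliptic data; and making the exceptional-singularity bound sharp enough to rule out reducibility for every plane-quintic and bielliptic cover not on the two special lists.
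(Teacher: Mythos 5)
Your overall strategy --- reduce to the trigonal/plane-quintic/bielliptic trichotomy via a lower bound on $\dim V^2_{\sing}$, then analyse each family separately --- is the paper's. Your first step is a mild variant: you derive connectedness in codimension one from connectedness plus Cohen--Macaulayness (Hartshorne's connectedness theorem), where the paper quotes Debarre's $(g(C)-5)$-connectedness of determinantal loci directly; both work. The trigonal and plane-quintic cases also follow the paper's route in outline, though in the quintic case the mechanism that actually forces $\dim V^2_{\sing}\le 0$ when $(P,\Theta)$ is not an intermediate Jacobian is Mumford's parity statement for $h^0(\upC,\pi^*\sO_C(H)\otimes\sO_{\upC}(q-\sigma(q)))$ combined with $V^2\subset P^-$; your sketch gestures at ``sharpening the exceptional-singularity analysis'' but does not supply this input.

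The genuine gap is in the bielliptic case. You treat every \'etale double cover of a bielliptic curve $C\to E$ as if it came from a $\Z_2\times\Z_2$-tower $\upC\to C\to E$ with intermediate quotients $C_1$, $C_2$; but the composite $\upC\to E$ need not be Galois, and the non-Galois covers (the family $\mathcal R'_{\mathcal B_{g(C)}}$, which is exactly the complement of $\bigcup_{g(C_1)}\mathcal R_{\mathcal B_{g(C),g(C_1)}}$ inside the bielliptic locus) are precisely the ones for which the theorem asserts irreducibility and which your ``same bookkeeping'' cannot reach: there are no curves $C_1,C_2$, no loci $Z_a$, and no decomposition of $V^2$ into norm-fibres over $E$. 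Your fallback --- show $\dim V^2_{\sing}<g(C)-5$ for these covers --- is also the wrong tool: for any bielliptic $C$ one has $\dim W^1_4C=1$, so the necessary condition of Proposition \ref{propositiondimsingularities} is satisfied and a singular locus of dimension $g(C)-5$ is not excluded whether or not $V^2$ is reducible; irreducibility must be proved directly, not inferred from a small singular locus. The paper devotes Section \ref{subsectiontetragonal} and Proposition \ref{propositionz2} to exactly this: Donagi's tetragonal construction identifies $V^2$ with a Beauville special subvariety attached to $|K_C\otimes H^*|$ on a nodal tetragonal curve, and irreducibility is obtained from Beauville's criterion for special subvarieties. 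Without an argument of this kind the ``only if'' direction does not close, since a non-Galois bielliptic cover with reducible $V^2$ would fall under none of the three listed cases.
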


It is known \cite{Sho81} that if $C$ is bielliptic of genus $g(C) \geq 8$, 
the Prym variety is not a Jacobian of a curve.
Moreover we know by \cite{Deb88} that these Prym varieties form 
$\lfloor \frac{g(C)-1}{2} \rfloor$ distinct subvarieties of $\mathcal A_{g(C)-1}$.
For exactly one of these families
the general member has the property that the cohomology class of {\em any} subvariety is a multiple
of the minimal class $\frac{\Theta^k}{k!}$. 
The proof of Theorem \ref{theoremirreduciblecomponents} shows that the Brill-Noether locus $V^2$ is irreducible if and only if the Prym variety belongs to this family!
This is the first evidence for Debarre's conjecture that is 
not derived from low-dimensional cases or 
considerations on Jacobians and intermediate Jacobians \cite{Ran80, Deb95, a5}.

{\bf Acknowledgements.}
The work of O. Debarre, M. Lahoz and J.-C. Naranjo plays an important r\^ole in this paper.
I want to thank them for patiently answering my numerous questions.

\section{Notation} \label{sectionnotation}

While most of our arguments are valid for an arbitrary algebraically closed field
of characteristic $\neq 2$, we will work over $\C$: this is necessary to apply
\cite{ACGH85} and \cite{Deb00} which are crucial for Theorem \ref{maintheorem} and its consequences.
For standard definitions in algebraic geometry we refer to \cite{Har77}, for Brill-Noether theory
to \cite{ACGH85}. 

Given a smooth curve $C$ we denote by $\pic C$ its Picard scheme and by
$$
\pic C = \bigcup_{d \in \Z} \picd^d C
$$ 
the decomposition in its irreducible components. 
We will identify the Jacobian $JC$ and the degree $0$ component $\picd^0 C$
of the Picard scheme.
In order to simplify the notation we denote by $L \in \pic C$ the point
corresponding to a given line bundle $L$ on $C$. Somewhat abusively we will say that a line bundle is effective
if it has a global section.

If \holom{\varphi}{X}{Y} is a finite cover between smooth curves and $D$ a divisor on $X$, we denote by
$\Nm \varphi(D)$ its norm. In the same way \holom{\Nm \varphi}{\pic X}{\pic Y} denotes the norm map.
If $\sF$ is a coherent sheaf on $X$ (in general $\sF$ will be the locally free sheaf corresponding to some divisor),
we denote by $\varphi_* \sF$ the push-forward as a sheaf.

Let $C$ be a smooth curve of genus $g(C)$ and \holom{\pi}{\tilde{C}}{C} 
an \'etale double cover. We have 
$\fibre{(\Nm \pi)}{K_C} = P^+ \cup P^-$,
where $P^- \simeq P^+ \simeq P$ are defined by
$$
P^-:=\{ L\in \fibre{(\Nm \pi)}{K_C} \: | \: \dim |L|\equiv 0 \mod 2\},
$$
$$
P^+:=\{ L\in \fibre{(\Nm \pi)}{K_C} \: | \: \dim |L| \equiv 1 \mod 2\}.
$$
For $r \geq 0$ we denote by 
$$
W^r_{2g(C)-2} \upC := \{ L\in \picd^{2g(C)-2} \upC \: | \: \dim |L| \geq r \}.
$$
The Brill-Noether loci of the Prym variety \cite{Wel85} are defined as the scheme-theoretical 
intersections
$$
V^r := W^r_{2g(C)-2} \upC \cap P^- \qquad \mbox {if $r$ is even}
$$
and
$$
V^r := W^r_{2g(C)-2} \upC \cap P^+ \qquad \mbox {if $r$ is odd.}
$$

The notion of theta-dual was introduced by Pareschi and Popa in their work on Fourier-Mukai transforms (cf. \cite{PP08b} 
for a survey).

\begin{definition} \label{definitionthetadual}
Let $(A, \Theta)$ be a \ppav, and let $X \subset A$ be any closed subset.
The theta-dual $T(X)$ of $X$ is the closed subset defined by
\[
T(X) := \{
\alpha \in A \ | \
h^0(A, \sI_X(\Theta) \otimes \alpha) \neq 0 
\}. 
\] 
\end{definition}

Note that $T(X)$ has a natural scheme structure \cite{PP08b}, 
set-theoretically it is defined by $X-T(X) \subset \Theta$.

\section{The singular locus of $V^2$}

In the whole section we denote by $C$ a smooth non-hyperelliptic curve of genus $g(C)$ 
and by \holom{\pi}{\tilde{C}}{C} an \'etale double cover. 
The following lemma will be used many times:

\begin{lemma} \label{lemmatangentspace}
Let $L \in V^r$ be a line bundle such that $\dim |L|=r$.
If the Zariski tangent space $T_L V^r$ satisfies
$$
\dim T_{L} V^r > g(C)-2r,
$$
there exists a line bundle $M$ on $C$ such that $\dim |M| \geq 1$ and an effective line bundle $F$ on $\tilde{C}$
such that
$$
L \simeq \pi^* M \otimes F.
$$
\end{lemma}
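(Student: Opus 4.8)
The plan is to translate the tangent-space hypothesis into the statement that the Prym--Petri map has a \emph{decomposable} element in its kernel, and then to read the asserted decomposition of $L$ off a divisor identity on $\tilde C$ that reflects the symmetry $\sigma$ of the double cover.

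\emph{The Prym--Petri map.} Since $\Nm\pi(L)=K_C$ we have $L\otimes\sigma^{*}L\simeq\pi^{*}K_{C}\simeq K_{\tilde C}$, so multiplication of sections yields $H^{0}(\tilde C,L)\otimes H^{0}(\tilde C,\sigma^{*}L)\to H^{0}(\tilde C,K_{\tilde C})$; its anti-symmetrization $\mu(a\wedge b):=a\cdot\sigma^{*}b-b\cdot\sigma^{*}a$ takes values in the $(-1)$-eigenspace $H^{0}(\tilde C,K_{\tilde C})^{-}$, of dimension $g(C)-1$, and is the Prym--Petri map $\mu\colon\wedge^{2}H^{0}(\tilde C,L)\to H^{0}(\tilde C,K_{\tilde C})^{-}$. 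By Welters' infinitesimal analysis \cite{Wel85}, $T_{L}V^{r}$ is the annihilator of $\im\mu$, so $\dim T_{L}V^{r}=(g(C)-1)-\rk\mu$; the hypothesis $\dim T_{L}V^{r}>g(C)-2r$ is thus equivalent to $\rk\mu\le 2r-2$, i.e.\ $\dim\ker\mu\ge\binom{r+1}{2}-(2r-2)$. This step is the one that is not elementary: one must use that the natural (determinantal) scheme structure on $V^{r}$, rather than merely the set underlying $W^{r}_{2g(C)-2}\tilde C\cap P^{\pm}$, computes $T_{L}V^{r}$ through $\mu$; this is exactly Welters' theorem, and everything afterwards is divisor bookkeeping.

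\emph{A decomposable kernel element.} I claim $\ker\mu$ contains a nonzero $\omega=s_{1}\wedge s_{2}$ with $s_{1},s_{2}\in H^{0}(\tilde C,L)$ independent. For $r=2$ this is automatic, as every nonzero element of $\wedge^{2}\bC^{3}$ is decomposable. In general the decomposable vectors in $\wedge^{2}H^{0}(\tilde C,L)$ form the affine cone over the Grassmannian $\G(2,r+1)$, of projective dimension $2r-2$ inside $\mathbb{P}(\wedge^{2}H^{0}(\tilde C,L))=\mathbb{P}^{\binom{r+1}{2}-1}$; since $(\dim\ker\mu-1)+(2r-2)\ge\binom{r+1}{2}-1$, the projective dimension theorem shows $\mathbb{P}(\ker\mu)$ meets $\G(2,r+1)$, i.e.\ $\ker\mu$ contains a nonzero decomposable element.

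\emph{Extracting the decomposition.} Let $D_{i}$ be the divisor of zeros of $s_{i}$. The identity $\mu(\omega)=0$, i.e.\ $s_{1}\cdot\sigma^{*}s_{2}=s_{2}\cdot\sigma^{*}s_{1}$ in $H^{0}(\tilde C,K_{\tilde C})$, reads $D_{1}+\sigma(D_{2})=D_{2}+\sigma(D_{1})$ on the level of divisors (the zeros of $\sigma^{*}s_{i}$ being $\sigma(D_{i})$), so $D_{1}-D_{2}$ is $\sigma$-invariant. Its positive and negative parts are then separately effective and $\sigma$-invariant, hence --- the cover being \'etale --- pull-backs of effective divisors on $C$; removing the common part $F_{0}:=\gcd(D_{1},D_{2})$ leaves an effective divisor $B$ on $C$ with $D_{1}=F_{0}+\pi^{*}B$. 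Thus $L\simeq\pi^{*}\O_{C}(B)\otimes\O_{\tilde C}(F_{0})$, of the required form $\pi^{*}M\otimes F$ with $F=\O_{\tilde C}(F_{0})$ effective.

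\emph{Positivity of $\dim|M|$.} Dividing $s_{1},s_{2}$ by a section cutting out $F_{0}$ produces two independent sections of $\pi^{*}M$, so with $\eta\in\pic C$ the $2$-torsion bundle defining the cover ($\pi_{*}\O_{\tilde C}\simeq\O_{C}\oplus\eta$, $\pi^{*}\eta\simeq\O_{\tilde C}$) one gets $h^{0}(M)+h^{0}(M\otimes\eta)=h^{0}(\tilde C,\pi^{*}M)\ge2$. After possibly replacing $M$ by $M\otimes\eta$ (which changes neither $\pi^{*}M$ nor the decomposition) we may assume $h^{0}(M)\ge h^{0}(M\otimes\eta)$, and the borderline case $h^{0}(M)=h^{0}(M\otimes\eta)=1$ is excluded: expanding $s_{1}\cdot\sigma^{*}s_{2}=s_{2}\cdot\sigma^{*}s_{1}$, after cancelling the nonzero contribution of $F_{0}$, in a $\sigma$-eigenbasis of $H^{0}(\pi^{*}M)$ would force the product of a nonzero $\sigma$-invariant and a nonzero $\sigma$-anti-invariant section of $\pi^{*}M$ to vanish on the integral curve $\tilde C$, which is absurd. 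Hence $h^{0}(M)\ge2$, i.e.\ $\dim|M|\ge1$, as wanted.
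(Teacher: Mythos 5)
Your proof is correct and follows essentially the same route as the paper: Welters' description of $T_LV^r$ as the annihilator of the Prym--Petri map, a dimension count showing the kernel meets the cone of decomposable vectors in a nonzero element $s_1\wedge s_2$, and the resulting $\sigma$-invariance of $D_1-D_2$. The only (harmless) divergence is the last step, where the paper simply notes that $s_1/s_2$ descends to a non-constant rational function $h$ on $C$, so that $(h)_0$ and $(h)_\infty$ give two distinct effective divisors in $|M|$, whereas you obtain $\dim|M|\geq 1$ via the eigenspace decomposition of $H^0(\tilde{C},\pi^*M)$ --- both arguments are valid.
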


\begin{remark}
For $r=1$ the scheme $V^1 = W^1_{2g(C)-2} \upC \cap P^+$ identifies to 
the canonical polarisation $\Theta$. The theta-divisor
has dimension $g(C)-2$, so the condition
$$
\dim T_L V^1 > g(C)-2,
$$
is equivalent to $V^1$ being singular in $L$. 
Thus for $r=1$ we obtain the well-known statement that if a point $L \in \Theta$ with $\dim |L|=1$ is 
in $\Theta_{\sing}$, the singularity is exceptional in the sense of Beauville \cite{Bea82b}.
\end{remark}

\begin{proof}
We consider the Prym-Petri map introduced by Welters \cite[1.8]{Wel85}
$$
\holom{\beta}{\wedge^2 H^0(\upC,L)}{H^0(\upC,K_{\upC})^-},
$$
where $H^0(\upC,K_{\upC})^-$ identifies to the tangent space of the Prym variety, in particular it has
dimension $g(C)-1$.
By \cite[Prop. 1.9]{Wel85} the Zariski tangent space of $V^r$
at the point $L$ equals the orthogonal of the image of $\beta$.
Thus if $\dim T_{L} V^r > g(C)-2r$, then $\rk \beta<2r-1$.
Since $\wedge^2 H^0(\upC, L)$ has dimension $\frac{r(r+1)}{2}$, this is equivalent to
$$
(*) \qquad \dim \ker \beta > \frac{r(r+1)}{2} - 2r-1.
$$
The locus of decomposable 2-forms in $\wedge^2 H^0(\upC, L)$
is the affine cone over the Pl\"ucker embedding of
$G(2, H^0(\upC, L))$ in $\PP(\wedge^2 H^0(\upC, L))$,
so it has dimension $2r-1$. 
Thus by $(*)$ there is a  non-zero decomposable vector $s_i\wedge s_j$ in $\ker
\beta$. 
This means that $s_i \sigma^*s_j- s_j \sigma^*s_i=0$, 
thus $\frac{s_j}{s_i}$ defines a rational function
$h$ on $C$. We conclude by taking $M=\sO_C((h)_0)$ 
and $F$ the maximal common divisor between $(s_i)_0$ and $(s_j)_0$.
By construction $F$ is effective and $\dim |M| \geq 1$.
\end{proof}

By \cite[Thm.2.2]{CMLV08},  \cite[Lemma 2.1]{IP01} 
every irreducible component of the Brill-Noether locus $V^2$ has 
dimension at most $g(C)-4$ if $C$ is not hyperelliptic. 
The following estimate is a generalisation of their statement to arbitrary $r$.

\begin{lemma} \label{lemmadimensionestimate}
We have
$$ 
\dim V^r \leq g(C)-2-r \qquad \forall \ r \geq 2.
$$
\end{lemma}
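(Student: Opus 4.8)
The plan is to run an induction on $r$ with the case $r=2$ serving as the base (this is precisely the bound of \cite[Thm.2.2]{CMLV08}, \cite[Lemma 2.1]{IP01}, valid since $C$ is not hyperelliptic), and to use Lemma \ref{lemmatangentspace} to control the locus where $V^r$ could be ``too big''. The key point is that a generic point $L$ of a $d$-dimensional irreducible component $Z$ of $V^r$ satisfies $\dim |L| = r$ (the sublocus $V^{r+1}$ is properly contained, by the base case and the inductive hypothesis applied to $r+1$ it has strictly smaller dimension, or at worst we stratify by $\dim|L|$ and treat each stratum separately). For such an $L$ on an irreducible component of dimension $d$, the Zariski tangent space satisfies $\dim T_L V^r \geq d$.

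First I would suppose for contradiction that $\dim V^r \geq g(C)-1-r$, and pick an irreducible component $Z$ of this dimension with generic point $L$ having $\dim|L|=r$. Since $d = \dim Z \geq g(C)-1-r > g(C)-2r$ whenever $r \geq 2$, Lemma \ref{lemmatangentspace} applies at $L$: there is a line bundle $M$ on $C$ with $\dim|M| \geq 1$ and an effective $F$ on $\upC$ with $L \simeq \pi^*M \otimes F$. The next step is to understand how such decompositions constrain $L$ as it varies in $Z$: the pencil $|M|$ on $C$ pulls back to a pencil contributing to $|L|$, so $\dim|\pi^*M| \geq 1$ and one studies the residual system $|F| = |L \otimes \pi^*M^{-1}|$. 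Counting: $F$ has degree $2g(C)-2 - 2\deg M$ on $\upC$, and $h^0(\upC, F) \geq r+1 - h^0(C,M)$ roughly speaking (the sections of $\pi^*M$ inside $H^0(L)$ account for part of the space). I would want to show that the locus of $L$ of this special form, for all choices of $M$ with $h^0(M)\geq 2$, sweeps out a family of dimension at most $g(C)-2-r$, contradicting $\dim Z = g(C)-1-r$.

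The cleanest route for the dimension count is to fix the norm class: since $L \in V^r \subset P^{\pm}$ we have $\Nm\pi(L) = K_C$, hence $\Nm\pi(F) = K_C \otimes \Nm\pi(M)^{-2} = K_C \otimes M^{-2}$ (using $\Nm\pi \circ \pi^* = (\cdot)^2$). Thus $F$ ranges in the fibre of $\Nm\pi$ over the fixed bundle $K_C \otimes M^{-2}$ on $C$, intersected with an appropriate Brill-Noether condition, and the bundle $M$ ranges over $W^1_{\deg M}C$; one estimates $\dim$(choices of $M$) $+$ $\dim$(choices of $F$ with the required number of sections) and checks it stays $\leq g(C)-2-r$. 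The combinatorics here will use the classical bound on $\dim W^s_e C$ together with the $r$-dimensional analogue for the Prym side, i.e. the inductive hypothesis of the very lemma being proved, applied to the residual Prym-type system for $F$ — this is where the induction closes.

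The main obstacle I expect is making the last dimension estimate uniform and tight: one must rule out the possibility that a large family of special $M$'s (for instance coming from a $g^1_k$ of low degree on a $k$-gonal curve) produces, after pulling back and adding the residual part, an $(g(C)-1-r)$-dimensional family of line bundles in $V^r$. Handling this likely requires splitting into the case where $\pi^*M$ itself already has a large linear system (so $L$ is ``mostly pulled back'') versus the case where the residual $F$ carries most of the sections, and in each case invoking either the classical Brill-Noether dimension bound on $JC$ or the inductive instance of Lemma \ref{lemmadimensionestimate} for the smaller residual system. A careful bookkeeping of section counts — ensuring the two contributions to $h^0(\upC,L)$ don't overlap in a way that inflates the count — is the delicate part; everything else is a formal induction.
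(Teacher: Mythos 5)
There is a genuine gap here: your argument is a plan whose decisive step is never carried out, and the naive version of that step does not suffice. Concretely, after invoking Lemma \ref{lemmatangentspace} to write a generic $L$ in a too-large component $Z$ as $L \simeq \pi^*M \otimes F$, the required estimate is that the locus of such decomposable bundles has dimension $< g(C)-1-r$. Parametrising by pairs $(M,F)$ with $M \in W^1_d C$ and $F$ effective with $\Nm\pi(F) = K_C \otimes M^{\otimes -2}$, Martens gives $\dim W^1_dC \leq d-3$, Clifford gives $\dim|K_C\otimes M^{\otimes -2}| \leq g(C)-d-2$, and the norm map on symmetric products has finite fibres, so the whole decomposable locus has dimension at most $g(C)-5$. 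That contradicts $\dim Z \geq g(C)-1-r$ only for $r \leq 3$; for $r \geq 4$ you would have to use $h^0(\upC,L) \geq r+1$ to cut the family of pairs down further, and since sections of $L$ need not factor as products of sections of $\pi^*M$ and of $F$, there is no clean inequality bounding $h^0(L)$ by $h^0(M)$ and $h^0(F)$ --- this is exactly the ``delicate bookkeeping'' you defer, and it is where the proof is missing rather than merely unpolished. Two further structural problems: (i) the induction runs the wrong way --- to reduce to $\dim|L|=r$ at the generic point of $Z$ you need the bound for $V^{r+2}$, which an upward induction from $r=2$ has not yet established (a downward induction from the emptiness of $V^r$ for large $r$ could repair this, but that is not what you set up); (ii) the ``inductive hypothesis applied to the residual system for $F$'' is not available, because $\Nm\pi(F) = K_C\otimes M^{\otimes -2} \neq K_C$, so $F$ does not lie in the Prym variety and the lemma being proved says nothing about it.

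For comparison, the paper's proof avoids tangent spaces and Brill--Noether theory on $C$ entirely and works upstairs in the symmetric product. Since $C$ is not hyperelliptic, $|K_C|$ embeds $C$, so by Beauville the preimage $\fibre{\Nm\pi}{|K_C|} \subset \upC^{(2g(C)-2)}$ has exactly two irreducible components $\Lambda_0,\Lambda_1$, both of dimension $g(C)-1$; the Abel--Jacobi preimage $\fibre{i}{V^r}$ sits inside one of them and is a \emph{proper} closed subset for $r\geq 2$ (a general $L\in P^{\pm}$ has $\dim|L|\leq 1$), hence has dimension at most $g(C)-2$. Since $\fibre{i}{L}=|L|$, the fibres over $V^r$ have dimension at least $r$, and $\dim V^r \leq g(C)-2-r$ follows in two lines. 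If you want to salvage your approach, the honest advice is that the tangent-space method is the right tool for the \emph{singular locus} (Proposition \ref{propositiondimsingularities}), where the count $g(C)-5$ is exactly what is needed, but not for the dimension bound itself.
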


\begin{proof} 
Denote by $|K_C| \subset C^{(2g(C)-2)}$ the set of effective canonical divisors
and by  \holom{\Nm \pi}{\tilde{C}^{(2g(C)-2)}}{C^{(2g(C)-2)}} the norm map.
Since the canonical linear system $|K_C|$ defines an embedding we know by
\cite[\S 2,Cor.]{Bea82b} that \fibre{\Nm \pi}{|K_C|} has exactly two irreducible components $\Lambda_0$ and $\Lambda_1$, both are normal varieties of dimension $g(C)-1$.
Let
$$
\holom{i}{\tilde{C}^{(2g(C)-2)}}{\picd^{2g(C)-2} \tilde{C}},  \ D \mapsto \sO_{\upC}(D)
$$
be the Abel-Jacobi map, then (up to renumbering)
$$
\varphi(\Lambda_0) = P^- \qquad \mbox{and} \qquad 
\varphi(\Lambda_1) = \Theta  \subset P^+.
$$
Recall that for all $L \in \pic \upC$ we have a set-theoretic equality $\fibre{i}{L} = |L|$.
In particular we see that 
\[
(*) \qquad \dim \fibre{i}{V^r} \geq \dim V^r + r
\]
for every $r \geq 0$. 

Suppose now that $r$ is even (the odd case is analogous and left to the reader).
For a general point $L \in P^-$ one has $\dim |L|=0$.
Thus for $r \geq 2$ one has  
\[
\fibre{i}{V^r} \subsetneq \Lambda_1,
\]
hence $\fibre{i}{V^r}$ has dimension at most $g(C)-2$. Conclude by $(*)$.
\end{proof}

\begin{remark} \label{remarkdimensionestimatehyperelliptic}
In the preceding proof the hypothesis $C$ not hyperelliptic was only used to show
that $\Lambda_0$ and $\Lambda_1$ are irreducible. Since the inequality $(*)$ is valid without this 
property we obtain that
$$
\dim V^r \leq g(C)-1-r \qquad \forall \ r \geq 2.
$$
We will see in Section \ref{subsectionhyperelliptic} that this estimate is optimal.
\end{remark}

We can now use Marten's theorem to give an estimate of the dimension of the
singular locus $V^2_{\sing}$.

\begin{proposition}\label{propositiondimsingularities}
Suppose that $g(C) \geq 6$ and $V^2_{\sing}$ has an irreducible component $S$ of dimension at least $g(C)-5$.
Then there exists a $d \in \{ 3, 4 \}$ such that 
$$
\dim W^1_d C = d-3
$$
and $W \subset W^1_d C$ an irreducible component of maximal dimension such that for every $M \in W$ one has
$$
\dim |K_C \otimes M^{\otimes -2}| = g-d-2.
$$
For every $L$ in $S$ one has
$$
L \simeq \pi^* M \otimes F
$$
for some $M \in W$ and some effective line bundle $F$ on $\upC$.
In particular $S$ has dimension equal to $g(C)-5$.
\end{proposition}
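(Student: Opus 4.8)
\emph{Strategy and Step 1 (producing $M$ and $F$).} The plan is to use Lemma~\ref{lemmatangentspace} to force every line bundle of $S$ into the shape $\pi^{*}M\otimes F$, and then to run a single dimension count that simultaneously produces the integer $d$, the component $W$, and all the asserted equalities. Fix a general $L\in S$ and set $r_{0}:=\dim|L|\ge 2$. Since $S\subseteq V^{r_{0}}$, Lemma~\ref{lemmadimensionestimate} gives $\dim S\le g(C)-2-r_{0}$, so the hypothesis $\dim S\ge g(C)-5$ forces $r_{0}\in\{2,3\}$. In both cases $\dim T_{L}V^{r_{0}}>g(C)-2r_{0}$: if $r_{0}=3$ this is clear since $\dim T_{L}V^{3}\ge\dim_{L}V^{3}\ge\dim S\ge g(C)-5>g(C)-6$; if $r_{0}=2$ one first observes that every irreducible component of $V^{2}$ has dimension $\ge g(C)-4$ (a standard bound for the symmetric degeneracy locus defining $V^{2}$, cf.\ \cite{Wel85}), so that $V^{2}$ is pure of dimension $g(C)-4$ by Lemma~\ref{lemmadimensionestimate}, whence $\dim_{L}V^{2}=g(C)-4$ and $L\in V^{2}_{\sing}$ gives $\dim T_{L}V^{2}\ge g(C)-3$. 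Lemma~\ref{lemmatangentspace} now yields a line bundle $M$ on $C$ with $\dim|M|\ge 1$ and an effective line bundle $F$ on $\upC$ with $L\simeq\pi^{*}M\otimes F$.

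\emph{Step 2 (the dimension count).} Put $d:=\deg M$, constant along the irreducible $S$; then $d\ge 3$ because $C$ is not hyperelliptic, and $d\le g(C)-1$ because $F\ge 0$ has degree $2g(C)-2-2d$. Applying $\Nm\pi$ to $L\simeq\pi^{*}M\otimes F$ gives $K_{C}=M^{\otimes 2}\otimes\Nm\pi(F)$, so $K_{C}\otimes M^{\otimes -2}$ is effective. Let $I$ be an irreducible component of
\[
\{(L,M,F)\ :\ L\in S,\ F\ge 0,\ L\simeq\pi^{*}M\otimes F\}\ \subseteq\ S\times W^{1}_{d}C\times\upC^{(2g(C)-2-2d)}
\]
which dominates $S$ (one exists by Step~1), and let $W\subseteq W^{1}_{d}C$ be the closure of the image of its second projection. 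Because $\Nm\pi\colon\upC^{(e)}\to C^{(e)}$ is finite ($\pi$ being \'etale), the fibres of $I\to W$ have dimension at most $\dim|K_{C}\otimes M^{\otimes -2}|$. Assuming for the moment $3\le d\le g(C)-2$, Martens' theorem (\cite{ACGH85}, $C$ not hyperelliptic) gives $\dim W^{1}_{d}C\le d-3$, and Clifford's inequality for the effective special bundle $K_{C}\otimes M^{\otimes -2}$, together with the parity of its degree $2g(C)-2-2d$, gives $\dim|K_{C}\otimes M^{\otimes -2}|\le g(C)-2-d$; hence
\[
g(C)-5\ \le\ \dim S\ \le\ \dim I\ \le\ \dim W+\dim|K_{C}\otimes M^{\otimes -2}|\ \le\ (d-3)+(g(C)-2-d)\ =\ g(C)-5 .
\]
Thus equality holds throughout: $\dim S=g(C)-5$; $\dim W=\dim W^{1}_{d}C=d-3$, so the irreducible $W$ is an irreducible component of $W^{1}_{d}C$ of maximal dimension; and $\dim|K_{C}\otimes M^{\otimes -2}|=g(C)-d-2$ for the general $M\in W$, which by Riemann--Roch and Serre duality is equivalent to $h^{0}(C,M^{\otimes 2})=d$.

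\emph{Step 3 ($d\in\{3,4\}$).} If $d=g(C)-1$ then $F=0$, so $L=\pi^{*}M$ and $M^{\otimes 2}=K_{C}$: $M$ is a theta-characteristic, there are finitely many, and $L$ is determined by $M$, so $\dim S=0<g(C)-5$ as $g(C)\ge 6$, a contradiction. For $3\le d\le g(C)-2$ consider the squaring map $\sigma\colon M\mapsto M^{\otimes 2}$, which is finite on $W^{1}_{d}C$ (its fibres are cosets of the $2$-torsion $JC[2]$); by $h^{0}(M^{\otimes 2})=d$ we have $\sigma(W)\subseteq W^{d-1}_{2d}C$, and $\dim\sigma(W)=\dim W=d-3$. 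Martens' theorem, applied to $W^{d-1}_{2d}C$ directly when $2d\le g(C)-1$ and via Serre duality to $W^{g(C)-d-2}_{2g(C)-2-2d}C$ when $g(C)-1\le 2d$, bounds $\dim W^{d-1}_{2d}C$ by $1$ for every $d$ with $3\le d\le g(C)-3$, so $d-3\le 1$, i.e.\ $d\le 4$. The only value left is $d=g(C)-2$, relevant only for $g(C)=7$, $d=5$; there $W^{d-1}_{2d}C\cong K_{C}-C^{(2)}$ has dimension $2$, and one excludes $d=5$ by Mumford's refinement of Martens' theorem ($\dim W^{1}_{5}C=2$ forces $C$ trigonal or bielliptic) followed by an explicit computation of $h^{0}(M^{\otimes 2})$ for the general member of the maximal component of $W^{1}_{5}C$ in each of these two cases, contradicting $h^{0}(M^{\otimes 2})=5$.

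\emph{Step 4 (from the general point to all of $S$) and the main obstacle.} Upper semicontinuity of $h^{0}$ together with Clifford's inequality upgrades $\dim|K_{C}\otimes M^{\otimes -2}|=g(C)-d-2$ from the general $M\in W$ to every $M\in W$; and since $I$ is projective, its image under the first projection is closed in $S$ and, being dense, equals $S$, so $L\simeq\pi^{*}M\otimes F$ with $M\in W$ and $F$ effective for \emph{every} $L\in S$. Together with $\dim S=g(C)-5$ and $\dim W^{1}_{d}C=d-3$ from Step~2, this is the full statement. The genuinely delicate points — and the main obstacle — are twofold: the bookkeeping that passes from ``general $L$, general $M$'' assertions to statements valid over all of $S$ and all of $W$; and the boundary value $d=g(C)-2$, where the clean Martens estimate for $W^{d-1}_{2d}C$ degenerates and one must bring in Mumford's theorem and a case-by-case Clifford computation on trigonal and bielliptic curves.
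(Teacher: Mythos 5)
Your proof is correct and follows essentially the same route as the paper: the tangent-space Lemma \ref{lemmatangentspace} produces the decomposition $L\simeq\pi^*M\otimes F$ at a general point of $S$, an incidence-variety dimension count sandwiches $\dim S$ between $g(C)-5$ and $\dim W+\dim|K_C\otimes M^{\otimes -2}|\le (d-3)+(g(C)-d-2)$ via Martens and (strict) Clifford, and finiteness of squaring plus a second application of Martens forces $d\le 4$. Two remarks. First, your case $r_0=3$ is vacuous: $S\subset V^2\subset P^-$, so $\dim|L|$ is even by the very definition of $P^-$; in particular $S\subseteq V^3$ never holds ($V^3$ lives in $P^+$), and the correct way to rule out $\dim|L|\ge 3$ at a general point is that it would force $S\subseteq V^4$, of dimension $\le g(C)-6$. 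This costs you nothing but the sentence as written is not literally meaningful. Second, and to your credit, you isolate the boundary value $d=g(C)-2$ (so $g(C)=7$, $d=5$), which the paper's own argument glosses over: there the residual map lands in $W^0_2C$, where Martens' theorem does not apply (it requires $r\ge 1$) and the dimension is $2$, so finiteness alone gives only $d-3\le 2$. Your proposed resolution --- Mumford's refinement forces $C$ trigonal or bielliptic, and in each case the general $M$ in the two-dimensional component of $W^1_5C$ has $h^0(C,M^{\otimes 2})=4\ne 5$ --- is correct, but you state it as a program rather than carrying out the (short) computation; for a complete proof that verification should be written out. Apart from these points the argument is sound and matches the paper's.
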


\begin{proof}
Let $L \in S$ be a generic point, then by Lemma \ref{lemmadimensionestimate}
we have $\dim |L|=2$.
Since $V^2$ is singular in $L$ we have
$$
\dim T_{L} V^2 > g(C)-4.
$$
Thus by Lemma \ref{lemmatangentspace} there
exists a line bundle $M \in W^1_d C$ for some $d \leq g(C)-1$ and an effective line bundle $F$ on $\tilde{C}$
such that
$$
L \simeq \pi^* M \otimes F.
$$
The family of such pairs $(M,F)$ is a finite cover of the set of pairs $(M, B)$
where $M \in W^1_d C$ for some $d \leq g(C)-1$ and
$B$ is an effective divisor of degree $2g(C)-2-2d \geq 0$ on $C$ 
such that $B \in |K_C\otimes M^{\otimes -2}|$.

By hypothesis the parameter space $T$ of the pairs $(M,B)$ has dimension at least $g(C)-5$.
Note that if $\deg M=g(C)-1$ then $K_C\otimes M^{\otimes -2} \simeq \sO_C$. 
Thus $M$ is a theta-characteristic and the space of pairs $(M,B)$ is finite, a
contradiction to $g(C)-5>0$. Since $C$ is not hyperelliptic we get $3 \leq \deg M<g(C)-1$.
Moreover one has by Clifford's theorem 
\[
(*) \qquad \dim |H^0(C, K_C\otimes M^{\otimes -2})| \leq g(C)-1-d-1.
\]
Thus the variety $W$ parametrising the line bundles $M$ has dimension at least $d-3$.
By construction $W \subset W^1_d$ and by Marten's theorem \cite[IV, Thm.5.1]{ACGH85}
\[
(**) \qquad \dim W \leq  \dim W^1_d C \leq d-3.
\]
Thus $T$ and $S$ have dimension at most $g(C)-5$. Since by hypothesis $S$ has dimension at least $g(C)-5$, we see that
$(*)$ and $(**)$ are equalities, at least for $M \in W$ generic. By upper semicontinuity and 
Clifford's theorem we get equality for every $M \in W$. 

The last remaining point is to show that this situation can only occur
for $d \in \{ 3, 4 \}$:
by what precedes we have a finite map 
$$
W \rightarrow W_{2g(C)-2-2d}^{g-d-2} C, \ M \mapsto K_C \otimes M^{\otimes -2}.
$$
If $2g(C)-2-2d \leq g(C)-1$ we know by Marten's theorem that $\dim W_{2g(C)-2-2d}^{g(C)-d-2} C \leq 1$.
Since $\dim W=d-3$ we get $d \leq 4$.

If $2g(C)-2-2d \geq g(C)$ we use the isomorphism 
$$
W_{2g(C)-2-2d}^{g(C)-d-2} C \rightarrow W^{d-1}_{2d} C, \ K_C \otimes M^{\otimes -2} \mapsto M^{\otimes 2}
$$
and Marten's theorem to see that $\dim W_{2g(C)-2-2d}^{g(C)-d-2} C \leq 1$, so we get again $d \leq 4$.
\end{proof}

\begin{proof}[Proof of Theorem \ref{maintheorem}]
The hyperelliptic case is settled in Proposition \ref{propositionhyperelliptic}, so suppose $C$ is not hyperelliptic.

By \cite[Ex.6.2.1)]{Deb00} the Brill-Noether-locus $V^2$ is a determinantal variety.
Since for $C$ not hyperelliptic it has the expected dimension, it is Cohen-Macaulay. 
Since $\dim V^2_{\sing} \leq g(C)-5$ by Proposition \ref{propositiondimsingularities} all the irreducible 
components of $V^2$ are generically reduced. A generically reduced Cohen-Macaulay scheme is reduced.

If $\dim V^2_{\sing} \geq g(C)-5$ then  $\dim W^1_d C=d-3$ for $d=3$ or $4$ by Proposition \ref{propositiondimsingularities}.
Thus the second statement follows from Mumford's refinement of Marten's theorem \cite[IV., Thm.5.2]{ACGH85}.
\end{proof}

\begin{remark*}
Based on completely different methods Lahoz and Naranjo \cite{LN10} have shown that $V^2$ is
reduced and Cohen-Macaulay.
\end{remark*}

\begin{corollary} \label{corollaryreducible} 
Let $C$ be a smooth non-hyperelliptic curve of genus $g(C) \geq 6$, 
and let \holom{\pi}{\tilde{C}}{C} be an \'etale double cover.
If $V^2$ is reducible, then $C$ is a plane quintic, trigonal or bielliptic.
\end{corollary}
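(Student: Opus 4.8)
The plan is to feed the structural information about $V^2$ provided by Theorem \ref{maintheorem} into a connectedness argument. By part b) of Theorem \ref{maintheorem} (and the first lines of its proof, where $V^2$ is shown to be a determinantal scheme of the expected dimension $g(C)-4$), the scheme $V^2$ is reduced, Cohen--Macaulay, and of pure dimension $g(C)-4$; recall that a Cohen--Macaulay scheme is unmixed, so all its irreducible components have the same dimension. On the other hand, since $g(C) \geq 6$ the expected dimension $g(C)-4$ is strictly positive, so by \cite[Ex.6.2]{Deb00} the Brill--Noether locus $V^2$ is connected. In particular $V^2$ is not disconnected, so its reducibility must come from irreducible components that actually meet.

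First I would invoke Hartshorne's connectedness theorem: a connected Cohen--Macaulay scheme is connected in codimension one. Writing $V^2 = Z_1 \cup \dots \cup Z_k$ for the decomposition into irreducible components, if $V^2$ is reducible then $k \geq 2$, and after renumbering the closed subset $Z_1 \cap (Z_2 \cup \dots \cup Z_k)$ is non-empty of codimension at most one in $V^2$; since $V^2$ is pure of dimension $g(C)-4$, this intersection has dimension at least $g(C)-5$.

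Next I would note that $V^2$ is reduced, hence it fails to be smooth at every point lying on two distinct irreducible components. Consequently
$$
Z_1 \cap (Z_2 \cup \dots \cup Z_k) \subset V^2_{\sing},
$$
so $V^2_{\sing}$ has an irreducible component of dimension at least $g(C)-5$. The conclusion is then immediate from the last assertion of Theorem \ref{maintheorem} b): $C$ is a plane quintic, trigonal, or bielliptic.

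The argument is short, and I do not expect a genuine obstacle; essentially all the content has already been extracted in Proposition \ref{propositiondimsingularities} and Theorem \ref{maintheorem}. The only points requiring a little care are bookkeeping ones: one must use that $V^2$ is \emph{pure-dimensional and reduced} (both supplied by Theorem \ref{maintheorem} b)) to ensure that a point lying on two components is a genuine singular point, and one must quote connectedness in codimension one in the correct Cohen--Macaulay plus connected form, so that the components of a reducible $V^2$ are forced to overlap in dimension $g(C)-5$.
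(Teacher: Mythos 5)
Your proof is correct and follows the same strategy as the paper: reduce reducibility of $V^2$ to the existence of a component of $V^2_{\sing}$ of dimension at least $g(C)-5$, then invoke Theorem \ref{maintheorem} b). The only difference is how the codimension-one overlap of components is obtained: the paper quotes Debarre's result \cite[Ex.6.2.1)]{Deb00} that the determinantal locus $V^2$ is $(g(C)-5)$-connected, which yields two components meeting in dimension $\geq g(C)-5$ directly, whereas you combine plain connectedness (also from \cite{Deb00}) with the Cohen--Macaulay and pure-dimensionality statements of Theorem \ref{maintheorem} b) and Hartshorne's connectedness theorem. Both routes are valid; yours leans a little more heavily on Theorem \ref{maintheorem} b), while the paper's citation makes the codimension-one connectedness available without appealing to the Cohen--Macaulay property at all.
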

  
\begin{proof}  
By a theorem of Debarre \cite[Ex.6.2.1)]{Deb00} the locus $V^2$ is $g(C)-5$-connected, i.e.
if $V^2$ is not irreducible, there exist two irreducible components $Z_1, Z_2 \subset V^2$ such that
$Z_1 \cap Z_2$ has dimension at least $g(C)-5$ in one point (cf. ibid, p.287). Thus if $V^2$ is reducible,
its singular locus has dimension at least $g(C)-5$. Conclude with Theorem \ref{maintheorem}.
\end{proof}

\section{Examples}

\subsection{Hyperelliptic curves} \label{subsectionhyperelliptic}

Let $C$ be a smooth hyperelliptic curve of genus $g(C)$ 
and \holom{\pi}{\tilde{C}}{C} an \'etale double cover
such that the Prym variety $(P, \Theta)$ is an irreducible \ppav, i.e.
$\upC$ is also a hyperelliptic curve. Let $\sigma: \upC \rightarrow \upC$ be the involution induced by $\pi$.

Recall \cite[Ch.12]{BL04} that in this case
the Abel-Prym map
$$
\alpha: \upC \rightarrow P, \ p \mapsto \sigma(p)-p
$$ 
is two-to-one onto its image $C'$ which is a smooth curve
and the Prym variety $(P, \Theta)$ is isomorphic to $(J(C'), \Theta_{C'})$. 

In \cite[Lemma 2.1]{CMLV08} the authors show that for $C$ not hyperelliptic, $V^2$ is a translate of
the theta-dual of the Abel-Prym embedded curve $\upC \subset P$.
In fact their argument works also for $C$ hyperelliptic if one replaces $\upC \subset P$ by 
$\alpha(\upC)=C' \subset P$. Thus we have:

\begin{lemma}
The Brill-Noether locus $V^2$ is a translate of the theta-dual $T(C')$.
\end{lemma}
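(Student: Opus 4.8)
The plan is to mimic the proof of \cite[Lemma 2.1]{CMLV08} with $\upC \subset P$ replaced by its image $C' = \alpha(\upC)$. The key point that makes the cited argument go through is that $V^2 = W^2_{2g(C)-2}\upC \cap P^-$ is set-theoretically the locus of $L \in P^-$ with $h^0(\upC, L) \geq 3$, and one wants to identify this with a translate of
$$
T(C') = \{ \alpha \in P \mid h^0(P, \sI_{C'}(\Theta) \otimes \alpha) \neq 0 \}
$$
under the isomorphism $(P, \Theta) \simeq (JC', \Theta_{C'})$ recalled above. First I would fix a theta-characteristic type reference point to translate $P^-$ into $P = JC' \simeq \picd^0 C'$, so that the polarisation $\Theta$ on $P$ corresponds to a translate of $W^{g(C')-1}_{g(C')-1}C'$; this is the standard dictionary between Brill–Noether loci on a Jacobian and the geometry of its theta-divisor as in \cite[Ch.VI]{ACGH85}.

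Second, I would recall from \cite{CMLV08} the chain of equivalences: for a point $x$ in the ambient ppav, $x \in T(Y)$ iff the translate $\Theta_x$ contains $Y$, iff every point of $Y$ imposes the vanishing condition cut out by $x$ on $\Theta$. When $Y = C'$ is a curve in a Jacobian, this condition translates, via the theory of the theta-divisor, into a statement about $h^0$ of a line bundle on $\upC$ jumping; the precise bookkeeping is exactly what is done in \cite[Lemma 2.1]{CMLV08}, and the only input that changes is that one pulls back along $\alpha$ rather than along the Abel–Prym embedding. Since $\alpha$ is two-to-one onto $C'$, the condition ``$\Theta_x \supseteq C'$'' is equivalent to ``$\Theta_x$ pulled back to $\upC$ vanishes on all of $\alpha^{-1}(C') = \upC$'', which is the same incidence condition appearing in the embedded case. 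Thus the set-theoretic identity $V^2 = T(C') + (\text{fixed translate})$ follows verbatim.

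The main obstacle is the scheme-theoretic refinement: the statement claims $V^2$ is a translate of $T(C')$ as schemes, and $T(C')$ carries the natural scheme structure of \cite{PP08b}. Here I would either (i) observe that the present Lemma is only invoked set-theoretically in the hyperelliptic analysis of Section~\ref{subsectionhyperelliptic} (where the goal is Proposition~\ref{propositionhyperelliptic}, an irreducibility and dimension statement), so a set-theoretic equality suffices; or (ii) if one insists on the scheme structure, note that $V^2$ is reduced in the hyperelliptic case too — one can check this directly since $(P,\Theta) \simeq (JC',\Theta_{C'})$ reduces everything to ordinary Brill–Noether loci $W^r_d C'$ on the Jacobian $JC'$, which are reduced for $C'$ of the relevant genus — and the theta-dual of a reduced curve in a Jacobian is reduced \cite{PP08b}, so a set-theoretic bijection between reduced schemes in the same abelian variety, compatible with the determinantal/incidence descriptions, upgrades to an isomorphism. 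I expect option (i) to be the intended route, so the proof is essentially a pointer to \cite{CMLV08} with the single substitution $\upC \rightsquigarrow \alpha(\upC)$ spelled out.
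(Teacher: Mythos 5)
Your proposal matches the paper's argument: the paper's entire ``proof'' is the remark preceding the lemma, namely that the argument of \cite[Lemma 2.1]{CMLV08} goes through verbatim once the Abel--Prym embedded curve $\upC$ is replaced by its image $C'=\alpha(\upC)$, precisely because the incidence condition $\Theta_x \supseteq C'$ only depends on the image of $\alpha$. Your option (i) on the scheme-theoretic issue is also the intended reading, since the paper only uses the statement set-theoretically (cf.\ the phrasing of Proposition \ref{propositionhyperelliptic}).
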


Since the Prym variety $(P, \Theta)$ is isomorphic to $(J(C'), \Theta_{C'})$, 
the theta-dual of $C'$ is a translate of $W^0_{g(C)-3} C'$. In particular $V^2$ is irreducible of
dimension $g(C)-3$.

\begin{proposition} \label{propositionhyperelliptic} 
Let $C$ be a smooth hyperelliptic curve of genus $g(C) \geq 6$, 
and let \holom{\pi}{\tilde{C}}{C} be an \'etale double cover 
such that the Prym variety $(P, \Theta)$ is an irreducible \ppav.
Then $V^2$  is irreducible of dimension $g(C)-3$,
set-theoretically it is a translate $W^0_{g(C)-3} C'$.

If $L \in V^2$ is any point, then
$$
L \simeq \pi^* H \otimes F
$$
where $H$ the unique $g^1_2$ on $C$ and $F$ is an effective line bundle on $\upC$. 
\end{proposition}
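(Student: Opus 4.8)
The first three assertions of the proposition — irreducibility, dimension $g(C)-3$, and the set-theoretic identification of $V^2$ with a translate of $W^0_{g(C)-3}C'$ — have already been obtained in the discussion preceding the statement, so I would simply refer back to it. The real content is the description of an arbitrary $L\in V^2$, and for this I plan to argue directly on the hyperelliptic curve $\upC$, using only Clifford's theorem for hyperelliptic curves.

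Since $r=2$ is even we have $V^2 = W^2_{2g(C)-2}\upC\cap P^-$, so any $L\in V^2$ satisfies $h^0(\upC,L)=r+1$ for some $r\geq 2$, while $\deg L = 2g(C)-2 = g(\upC)-1 < 2g(\upC)-2$, so $L$ is special. I would then invoke the classification of special linear series on a hyperelliptic curve (cf. \cite{ACGH85}): writing $\widetilde H$ for the unique $g^1_2$ on $\upC$, every special line bundle $N$ on $\upC$ with $h^0(N)=s+1$ can be written as $N\cong \widetilde H^{\otimes s}\otimes\sO_{\upC}(E)$, where $E\geq 0$ is the base divisor of $|N|$ and contains no hyperelliptic conjugate pair. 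Applied to $L$ this gives $L\cong \widetilde H^{\otimes r}\otimes\sO_{\upC}(E)$ with $E$ effective and $r\geq 2$.

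The key step is to identify $\pi^*H$ with $\widetilde H^{\otimes 2}$, where $H$ is the unique $g^1_2$ on $C$. Here I would note that $\pi^* H$ has degree $4$, is special (as $g(C)\geq 3$), and carries the base-point-free sub-pencil $\pi^*|H|$ — base-point-free because $|H|$ is, and a pencil because $D\mapsto \pi^*D$ is injective on divisors since $\pi$ is étale — so $h^0(\upC,\pi^*H)\geq 2$. By the classification, $\pi^*H\cong \widetilde H^{\otimes k}\otimes\sO_{\upC}(E')$ with $2k+\deg E'=4$, hence $k\in\{1,2\}$. If $k=1$ then $h^0(\pi^*H)=2$, so the base-point-free pencil $\pi^*|H|$ already exhausts $|\pi^*H|$; but the classification places the nonzero divisor $E'$ of degree $2$ in the base locus of $|\pi^*H|$, a contradiction. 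Therefore $k=2$ and $E'=0$, i.e. $\pi^*H\cong\widetilde H^{\otimes 2}$.

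Combining the two steps yields $L\cong\widetilde H^{\otimes r}\otimes\sO_{\upC}(E)\cong \pi^*H\otimes\bigl(\widetilde H^{\otimes(r-2)}\otimes\sO_{\upC}(E)\bigr)$, and since $r\geq 2$ the line bundle $F:=\widetilde H^{\otimes(r-2)}\otimes\sO_{\upC}(E)$ is effective, which is the claim. The main obstacle is pinning down the hyperelliptic classification in exactly the form needed — in particular that the ``extra'' effective divisor is the base locus, which is what forces $E'=0$ and excludes the case $k=1$; once that is available, everything else is routine.
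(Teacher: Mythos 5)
Your argument is correct, but it takes a genuinely different route from the paper. The paper reuses its general machinery: since $\dim V^2=g(C)-3>g(C)-4$, the Zariski tangent space at a general $L$ with $\dim|L|=2$ is too big, so Lemma \ref{lemmatangentspace} (the Prym--Petri map) produces a decomposition $L\simeq\pi^*M\otimes F$ with $\dim|M|\geq 1$; the dimension count of Proposition \ref{propositiondimsingularities} is then rerun, and the only way to account for a $(g(C)-3)$-dimensional family is for Clifford's inequality $(*)$ to fail, which on a hyperelliptic curve forces $M$ to be a multiple of the $g^1_2$. You instead work entirely on $\upC$, which is hyperelliptic by the irreducibility hypothesis on $(P,\Theta)$: the classification of complete special linear series on a hyperelliptic curve gives $L\simeq\widetilde H^{\otimes r}\otimes\sO_{\upC}(E)$ with $E$ the base divisor, and your base-point argument identifying $\pi^*H\simeq\widetilde H^{\otimes 2}$ is sound (with $k=1$ excluded exactly as you say). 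This is more elementary --- no Prym--Petri map and no dimension count --- and it has the advantage of treating an \emph{arbitrary} $L\in V^2$ directly, whereas the paper's proof argues at a general point; on the other hand it is special to the hyperelliptic situation, while the paper's method runs in parallel with the non-hyperelliptic case. One small repair: ``$\deg L<2g(\upC)-2$, so $L$ is special'' is not a valid implication by itself; the correct (and immediate) reason is that $\deg L=g(\upC)-1$ and $h^0(\upC,L)\geq 3$, so Riemann--Roch gives $h^1(\upC,L)=h^0(\upC,L)>0$.
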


\begin{proof}
By Remark \ref{remarkdimensionestimatehyperelliptic} we have a proper inclusion $V^4 \subsetneq V^2$,
so a general $L \in V^2$ satisfies $\dim |L|=1$.
By Lemma \ref{lemmatangentspace} there
exists a line bundle $M \in W^1_d C$ for some $d \leq g(C)-1$ and an effective line bundle $F$ on $\tilde{C}$
such that
$$
L \simeq \pi^* M \otimes F.
$$
We can now argue as in the proof of Proposition \ref{propositiondimsingularities} to obtain the statement, the
only thing we have to observe is that the inequality 
\[
(*) \qquad \dim |H^0(C, K_C\otimes M^{\otimes -2})| \leq g(C)-1-d-1.
\]
is also valid on a hyperelliptic curve unless $M$ is a multiple of the $g^1_2$.
\end{proof}

\subsection{Plane quintics} \label{subsectionquintic}

Let $C \subset \PP^2$ be a smooth plane quintic and \holom{\pi}{\tilde{C}}{C} an \'etale double cover.
We denote by $H$ the restriction of the hyperplane divisor to $C$, and by $\eta \in \picd^0 C$ the two-torsion line bundle 
inducing $\pi$.
 Let $\sigma: \upC \rightarrow \upC$ be the involution induced by $\pi$.

\begin{example} \label{exampleintermediate}
Suppose that $h^0(C, \sO_C(H) \otimes \eta)$ is odd, i.e. the Prym variety  
is isomorphic to the intermediate Jacobian of a cubic threefold \cite{CG72}. 
The Fano variety $F$ parametrising lines on the threefold is a smooth surface
that has a natural embedding in the intermediate Jacobian.
The surface $F \subset P$ has minimal cohomology class $[\frac{\Theta^3}{3!}]$ (ibid). 
Moreover it follows from \cite{a4} and \cite{PP08} that the theta-dual satisfies $T(F)=F$. 
It is well-known that $\upC \subset F$ (up to translation), so 
$$
F=T(F) \subset V^2 =T(\upC).
$$ 
Since the condition $\dim |L| \geq 2$ is invariant under isomorphism, the 
Brill-Noether locus $V^2$ is stable under the map $x \mapsto -x$.
Thus $F \subset V^2$ implies that $-F \subset V^2$. Since the
cohomology class of $V^2$ is $[2 \frac{\Theta^3}{3!}]$ we see that up to translation
$V^2$ is a union of  $F$ and $-F$. 
In particular $V^2$ is reducible and its singular locus is the intersection of the two irreducible components.
Since $V^2$ is Cohen-Macaulay, the singular locus has pure dimension one.
\end{example}

We will now prove the converse of the example:

\begin{proposition} \label{propositionplanequintics}
The Brill-Noether locus $V^2$ is reducible if and only if 
$h^0(C, \sO_C(H) \otimes \eta)$ is odd, i.e. if and only if the Prym variety
is isomorphic to the intermediate Jacobian of a cubic threefold.
 
In this case the singular locus $V^2_{\sing}$ is a translate of $\upC$.
\end{proposition}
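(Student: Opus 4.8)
The plan is to prove the two equivalences by combining Example \ref{exampleintermediate} (which gives one direction) with a dimension count on the singular locus, using Proposition \ref{propositiondimsingularities} as the main technical input. Example \ref{exampleintermediate} already establishes that if $h^0(C,\sO_C(H)\otimes\eta)$ is odd then $V^2$ is reducible (it decomposes, up to translation, as $F\cup(-F)$). Conversely, suppose $V^2$ is reducible. By Corollary \ref{corollaryreducible} and the hypothesis that $C$ is a plane quintic, the singular locus $V^2_{\sing}$ has an irreducible component $S$ of dimension at least $g(C)-5$ (as in the proof of that corollary, via Debarre's connectedness result \cite[Ex.6.2.1)]{Deb00}). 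So Proposition \ref{propositiondimsingularities} applies: there is $d\in\{3,4\}$ with $\dim W^1_d C=d-3$, an irreducible component $W\subset W^1_d C$ of maximal dimension with $\dim|K_C\otimes M^{\otimes-2}|=g(C)-d-2$ for every $M\in W$, and every $L\in S$ is of the form $L\simeq\pi^*M\otimes F$ with $M\in W$, $F$ effective on $\upC$.

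Next I would pin down which $d$ occurs. For a smooth plane quintic $g(C)=6$ and $K_C=\sO_C(2H)$, so the $g^1_d$'s on $C$ are classified: there is no $g^1_3$ (that would force $C$ hyperelliptic or trigonal, but a plane quintic is neither), while the $g^1_4$'s are exactly the pencils $|\sO_C(H)\otimes\sO_C(-p)|$ cut out by lines through a point $p\in C$, giving $\dim W^1_4 C=1=d-3$. Hence $d=4$, and $W$ is (a component of) this one-dimensional family; for $M=\sO_C(H-p)$ one computes $K_C\otimes M^{\otimes-2}=\sO_C(2H)\otimes\sO_C(-2H+2p)=\sO_C(2p)$, and indeed $\dim|\sO_C(2p)|=0=g(C)-d-2$. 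Since $\dim S=g(C)-5=1$ and $\dim W=1$, the map $W\to S$ sending $M$ to the corresponding $L$ (with $F$ the pullback of the unique effective divisor in $|K_C\otimes M^{\otimes-2}|$, together with the choice making $\dim|L|=2$) is generically finite; I need to see it is in fact birational so that $S$ is a translate of $W\cong C$ inside $P$. Concretely, $L\simeq\pi^*(H-p)\otimes\pi^*(p)=\pi^*H$ would be independent of $p$, which is absurd, so the relevant $F$ is not $\pi^*(\text{of a divisor on }C)$: writing $\pi^{-1}(p)=\{\tilde p_1,\tilde p_2\}$, the two choices $F=\tilde p_i$ give the two points of $\alpha(\upC)$ over $p$, so $S$ is a translate of $\alpha(\upC)$, i.e.\ of $\upC$ itself (the Abel--Prym map being an embedding since $C$ is not hyperelliptic). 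This proves the last assertion of the proposition, and in particular $\dim V^2_{\sing}=1$.

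Finally I would use this to force the Prym variety to be an intermediate Jacobian. Since $V^2$ is Cohen--Macaulay of pure dimension $g(C)-4=2$ (Theorem \ref{maintheorem}) and reducible, write $V^2=Z_1\cup\cdots\cup Z_k$ with $k\geq2$ irreducible surfaces; by the connectedness argument $V^2_{\sing}\supseteq\bigcup_{i\neq j}(Z_i\cap Z_j)$ has dimension exactly one, and each $Z_i\cap Z_j$ is a curve. The cohomology class of $V^2$ is $[2\frac{\Theta^3}{3!}]$ by \cite{dCP95}, so $\sum[Z_i]=[2\frac{\Theta^3}{3!}]$ with each $[Z_i]$ a nonzero effective class; I would invoke the classification of such decompositions together with the fact that $V^2=T(\upC)$ is stable under $x\mapsto-x$ (the condition $\dim|L|\geq2$ is an isomorphism invariant) to conclude $k=2$ with $Z_2=-Z_1$ (up to translation) and $[Z_1]=[\frac{\Theta^3}{3!}]$ minimal. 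A surface of minimal class containing the Abel--Prym curve $\upC$ in an irreducible ppav of dimension $5$ must be a Fano surface of a cubic threefold — here I would cite the characterization used in Example \ref{exampleintermediate} (\cite{a4}, \cite{PP08}, and the Clemens--Griffiths picture \cite{CG72}) — which exists inside $(P,\Theta)$ precisely when $(P,\Theta)$ is such an intermediate Jacobian, equivalently when $h^0(C,\sO_C(H)\otimes\eta)$ is odd \cite{CG72}. The main obstacle is this last step: ruling out any other reducible configuration (e.g.\ $k=3$, or components of non-minimal class) and identifying $Z_1$ with a Fano surface cleanly, rather than just matching cohomology classes; I expect to lean on the rigidity of the minimal class together with the explicit form $L\simeq\pi^*(H-p)\otimes F$ forced above, which already shows the two natural components are translates of $F=T(F)$ and $-F$.
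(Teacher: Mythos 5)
Your proof agrees with the paper up to and including the identification of the singular component: reducibility forces $\dim V^2_{\sing}\geq g(C)-5=1$ via Debarre's connectedness, Proposition \ref{propositiondimsingularities} applies, $d=4$ since a plane quintic has no $g^1_3$, the pencils are $M\simeq\sO_C(H-p)$ with $K_C\otimes M^{\otimes -2}\simeq\sO_C(2p)$, and after discarding $F=\pi^*p$ (which would give the constant $L\simeq\pi^*H$) the general singular point is $L\simeq\pi^*\sO_C(H)\otimes\sO_{\upC}(q-\sigma(q))$, so $S$ is a translate of the Abel--Prym curve. (Minor slip: $F$ has degree $2$, so the two choices are $F=2\tilde p_i$, not $F=\tilde p_i$.) This is exactly the paper's argument for the last assertion.

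The gap is in your final paragraph. To get from ``$V^2$ is reducible'' to ``$(P,\Theta)$ is an intermediate Jacobian'' you propose to show that a component has minimal class $[\Theta^3/3!]$ and then invoke a characterization of minimal-class surfaces in a $5$-dimensional ppav as Fano surfaces. No such characterization is available: that statement is precisely Debarre's conjecture for non-Jacobians in dimension $5$, which the introduction of this paper presents the present result as \emph{evidence for}; Example \ref{exampleintermediate} and the references \cite{a4}, \cite{PP08}, \cite{a5} only treat ppavs already known to be intermediate Jacobians. Even the reduction to $k=2$ components of minimal class is not automatic (the symmetry $x\mapsto -x$ allows a self-symmetric third component, and effective classes summing to $2\Theta^3/3!$ need not individually be multiples of the minimal class). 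The paper closes the argument in a completely different and elementary way, using the explicit form of $L$ you already derived: by Mumford's parity result \cite[p.347]{Mum74b}, $h^0(\upC,\pi^*\sO_C(H)\otimes\sO_{\upC}(q-\sigma(q)))\equiv h^0(C,\sO_C(H)\otimes\eta)\pmod 2$; since these $L$ lie in $V^2\subset P^-$ their $h^0$ is odd, which forces $h^0(C,\sO_C(H)\otimes\eta)$ to be odd, i.e.\ $(P,\Theta)$ is the intermediate Jacobian of a cubic threefold by \cite{CG72}. You should replace your cohomology-class argument by this parity step.
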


\begin{proof} 
Suppose that $V^2_{sing}$ has a component $S$ of dimension one.
Since $C$ is not trigonal we know by Proposition \ref{propositiondimsingularities}
that $S$ corresponds to a one-dimensional component $W \subset W^1_4 C$ such that  
for every $[M] \in W$ 
$$
|K_C \otimes M^{\otimes -2}| \neq \emptyset.
$$
By adjunction $K_C \simeq \sO_C(2H)$ and by \cite[\S 2, (iii)]{Bea82a} we have
$M \simeq \sO_C(H-p)$ where $p \in C$ is a point. Thus $K_C \otimes M^{\otimes -2} \simeq \sO_C(2p)$ 
and a general point $L \in S$ is of the form
$$
L \simeq \pi^* \sO_C(H-p) \otimes \sO_{\upC}(q_1+q_2)
$$
where $q_1, q_2$ are points in $\upC$. Since $\Nm \pi L \simeq \sO_C(2H)$ and $C$
is not hyperelliptic we obtain that $q_i \in \fibre{\pi}{p}$. Thus we can write
$$
L \simeq \pi^* H \qquad \mbox{or} \qquad L \simeq  \pi^* \sO_C(H) \otimes \sO_{\upC}(q-\sigma(q)) 
\ \mbox{for some} \ q \in \upC. 
$$
Since $L$ varies in a one-dimensional family we can exclude the first case.
By Mumford's description of Prym varieties whose theta-divisor has a singular locus of dimension $g(C)-5$,
we know (cf. \cite[p.347, l. -4]{Mum74b}) that $h^0(C, \sO_C(H) \otimes \eta)$ is even if and only if
$h^0(\upC, \pi^* \sO_C(H) \otimes \sO_{\upC}(q-\sigma(q)))$ is even. 
Since $V^2 \subset P^-$ this shows the statement.

The description of the general points $L \in S$ shows that $V^2_{\sing}$ has a unique one-dimensional component 
and that it is the translate by $\pi^* \sO_C(H)$ of the Abel-Prym embedded $\upC \subset P$.
\end{proof}

\subsection{Trigonal curves} \label{subsectiontrigonal}

Let $C$ be a trigonal curve of genus $g(C) \geq 6$.
Let \holom{\pi}{\tilde{C}}{C} an \'etale double cover, and $(P, \Theta)$ the corresponding Prym variety.
By Recillas' theorem \cite{Rec74} the Prym variety is isomorphic as a \ppav \ to
the polarised Jacobian $(J X, \Theta_X)$ of a tetragonal curve $X$ of genus $g(C)-1$. 
By Recillas's construction \cite[Ch.12.7]{BL04} we also know how to recover the double cover \holom{\pi}{\tilde{C}}{C}
from the curve $X$: let \holom{s}{X^{(2)} \times X^{(2)}}{X^{(4)}} be the sum map, then 
$$
\tilde C \simeq p_1(\fibre{s}{\PP^1}),
$$  
where $\PP^1 \subset X^{(4)}$ is the linear system giving the tetragonal structure and $p_1$ the projection
on the first factor. In particular we see that
$$
\tilde C \subset X^{(2)} \simeq W^0_2 X.
$$
Thus (up to choosing an isomorphism $(P, \Theta) \simeq (J X, \Theta_X)$ and appropriate translates) one has
\[
T(W^0_2 X) \subset T(\upC) \simeq V^2.
\]
By \cite[Ex.4.5]{PP08} the theta-dual of $W^0_2 X$ is $-W^0_{g(C)-4} X$.
As in the case of the intermediate Jacobian \ref{exampleintermediate} we see that up to translation
\[
V^2 = -W^0_{g(C)-4} X \cup W^0_{g(C)-4} X,
\]
and the singular locus of $V^2$ is the union of $\pm (W^0_{g(C)-4} X)_{\sing}$, which has
dimension at most $g(C)-6$ and
the intersection of the two irreducible components, which has dimension $g(C)-5$.

\section{Prym varieties of bielliptic curves I}

\subsection{Special subvarieties}  \label{subsectionspecial}
We recall some well-known facts about special subvarieties which we will use in the next section.

Let \holom{\varphi}{X}{Y} be a double cover (which may be \'etale or ramified) of smooth curves.
We suppose that $g(Y)$ is at 
least one, and denote by \holom{\Nm \varphi}{\pic X}{\pic Y} the norm morphism. 
Let $M$ be a globally generated line bundle of degree $d \geq 2$ on $Y$.
Denote by $\PP^r \subset Y^{(d)}$ where $r:=\dim |M|$ the set of effective divisors in the linear system $|M|$.
If  \holom{\Nm \varphi}{X^{(d)}}{Y^{(d)}} is the norm map,
then $\Lambda:=\fibre{\Nm \varphi}{\PP^r}$ is a reduced Cohen-Macaulay scheme of pure dimension $r$ and the map $\Lambda \rightarrow |M|$
is \'etale of degree $2^d$ over the locus of smooth divisors in $|M|$ which do not meet the
branch locus of $\varphi$. 

If $\varphi$ is \'etale, $\Lambda$ has exactly two connected components $\Lambda_0$ and $\Lambda_1$ 
\cite{Wel81}.
If $\varphi$ is ramified, the scheme $\Lambda$ is connected \cite[Prop.14.1]{Nar92}.
Let 
$$
\holom{i_Y}{Y^{(d)}}{J Y}, \ D \mapsto \sO_Y(D)
$$ 
and 
$$
\holom{i_X}{X^{(d)}}{J X},  \ D \mapsto \sO_{X}(D)
$$
be the Abel-Jacobi maps, then we have a commutative diagram
\[
\xymatrix{
\Lambda \ar @{^{(}->}[r] \ar[d] & X^{(d)} \ar[r]^{i_X} \ar[d]_{\Nm \varphi}
& \picd^d X \ar[d]^{\Nm \varphi}
\\
\PP^r \ar @{^{(}->}[r]  & Y^{(d)} \ar[r]_{i_Y} & \picd^d Y
}
\]
The fibre of $i_X (X^{(d)}) \rightarrow i_Y (Y^{(d)})$ over the point $M$
(and thus the intersection of  $i_X (X^{(d)})$ with $\fibre{\Nm \varphi}{M}$) 
is equal (at least set-theoretically) to $i_X(\Lambda)$.

Fix now a connected component $S \subset \Lambda$. Then we call $V:=i_X(S)$
{\em a special subvariety\footnote{In general it is not true that $S$ is irreducible, in particular
the special subvariety may not be a variety.
Note also that in general it should be clear which covering we consider, otherwise
we say that $V$ is a $\varphi$-special subvariety associated to $M$.}} associated to $M$. 
Obviously one has 
\begin{equation} \label{equationdimensionspecial}
\dim V = r- \dim |\sO_X(D)|
\end{equation}
where $D \in S$ is a general point.

The following technical definition will be very important in the next section:

\begin{definition} \label{definitionsimple}
Let \holom{\varphi}{X}{Y} be a double cover of smooth curves. An effective divisor $D \subset X$ is not simple if there exists a point $y \in Y$ such that $\varphi^* y \subset D$. It is simple if this is not the case.
\end{definition}

Note that if an effective divisor $D \subset X$ is not simple, then $\Nm \varphi(D)$ is not reduced. Hence if $Y$ is an elliptic curve and $M$ a line bundle of degree $d \geq 2$ on $Y$, then a general divisor $D \in X^{(d)}$ such that $\Nm \varphi(D) \in |M|$
is simple: the linear system $|M|$ is base-point free, so a general element is reduced. 

\begin{lemma} \label{lemmaspecialoverelliptic}
Let \holom{\varphi}{X}{Y} be a ramified double cover of smooth curves such that $Y$ is an elliptic curve. 
Denote by $\delta_\varphi$ the line bundle of degree $g(X)-1$ defining the cyclic cover $\varphi$.
Let $M \not\simeq \delta_\varphi$ be a line bundle of degree $2 \leq d \leq g(X)-1$ on $Y$.

a) Then $\Lambda$ is smooth and irreducible.

b) A general divisor $D \in \Lambda$ is simple and satisfies $\dim |\sO_X(D)|=0$.

In particular there exists a unique special subvariety associated to $M$,
it is irreducible of dimension $d-1$.
\end{lemma}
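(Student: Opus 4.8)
The plan is to settle the routine reductions first and then focus on the smoothness of $\Lambda$, which carries the content of (a). As recalled in the discussion preceding the lemma, $\Lambda=(\Nm\varphi)^{-1}(\PP^r)$ is a reduced Cohen-Macaulay scheme of pure dimension $r$; since $Y$ is elliptic and $\deg M=d\ge 2$ we have $h^0(Y,M)=d$, so $\PP^r=|M|$ is base-point free of dimension $r=d-1$, and, $\varphi$ being ramified, $\Lambda$ is connected by \cite[Prop.14.1]{Nar92}. A connected smooth scheme is irreducible, so (a) follows once $\Lambda$ is shown to be smooth. Granting (a) and (b), the final assertion is immediate: $\Lambda$ has a unique connected component, hence there is a unique special subvariety $V=i_X(\Lambda)$, which is irreducible because $\Lambda$ is, and by \eqref{equationdimensionspecial} together with (b) it has dimension $r-\dim|\mathcal O_X(D)|=(d-1)-0=d-1$.

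\emph{Smoothness.} Scheme-theoretically $\Lambda$ is the fibre over $M$ of $f:=\Nm\varphi\circ i_X\colon X^{(d)}\to\picd^d Y$, whose target is a curve since $Y$ is elliptic. As $X^{(d)}$ is smooth of dimension $d$ and $\dim\Lambda=d-1$, the scheme $\Lambda$ is smooth at a point $D$ precisely when $d_D f\colon T_D X^{(d)}=H^0(D,\mathcal O_D(D))\to T_M\picd^d Y=H^1(Y,\mathcal O_Y)$ is nonzero. Now $d_D f=\mathrm{tr}_\varphi\circ\delta$, where $\delta$ is the coboundary of $0\to\mathcal O_X\to\mathcal O_X(D)\to\mathcal O_D(D)\to 0$ and $\mathrm{tr}_\varphi\colon H^1(X,\mathcal O_X)\to H^1(Y,\mathcal O_Y)$ is the trace; by Serre duality the transpose of $d_D f$ is the composite $H^0(Y,\omega_Y)\xrightarrow{\varphi^*}H^0(X,\omega_X)\xrightarrow{\mathrm{res}}H^0(D,\omega_X|_D)$. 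Since $Y$ is elliptic $H^0(Y,\omega_Y)$ is spanned by a nowhere-vanishing $1$-form $\omega$, and $\varphi^*\omega$, seen as a section of $\omega_X$ through $\varphi^*\omega_Y=\omega_X(-R)\hookrightarrow\omega_X$ with $R$ the ramification divisor, vanishes exactly along $R$. Thus $\Lambda$ is smooth at $D$ if and only if $D\not\le R$, and (a) reduces to showing that no $D\in\Lambda$ is contained in $R$.

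\emph{The main obstacle.} This is where $M\not\simeq\delta_\varphi$ and $d\le g(X)-1$ should enter, and I expect it to be the hard part. If $D\le R$ then $D$ is fixed by the involution $\sigma$ of $\varphi$, so $\mathcal O_X(D)$ is $\sigma$-invariant and $\mathcal O_X(D)^{\otimes 2}\simeq\mathcal O_X(D)\otimes\sigma^*\mathcal O_X(D)\simeq\varphi^*\Nm\varphi(\mathcal O_X(D))\simeq\varphi^*M$. By the projection formula and $\varphi_*\mathcal O_X=\mathcal O_Y\oplus\delta_\varphi^{-1}$, together with $\deg(M\otimes\delta_\varphi^{-1})=d-(g(X)-1)\le 0$ and $M\not\simeq\delta_\varphi$, this gives $h^0(X,\varphi^*M)=h^0(Y,M)=d$; meanwhile $\mathcal O_X(R-D)=\omega_X\otimes\mathcal O_X(-D)$ is effective, so $\mathcal O_X(D)$ is special. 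I would look for a contradiction by playing these facts against one another --- for instance applying the base-point-free pencil trick to $\mathcal O_X(D)$ inside $\mathcal O_X(D)^{\otimes 2}=\varphi^*M$ and Clifford's inequality to $\mathcal O_X(D)$ --- and by treating the boundary value $d=g(X)-1$ separately. Getting this numerology to close is the delicate point; the rest is bookkeeping.

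\emph{Part (b).} Once $\Lambda$ is irreducible, ``a general $D\in\Lambda$ is simple'' is the observation after Definition \ref{definitionsimple}: a general member of the base-point free system $|M|$ (positive-dimensional as $d\ge 2$) is reduced, and any divisor on $X$ lying over it is simple. For $\dim|\mathcal O_X(D)|=0$ I would compare dimensions: by the diagram preceding Definition \ref{definitionsimple}, $i_X(\Lambda)=W^0_d X\cap(\Nm\varphi)^{-1}(M)$ set-theoretically; since $2\le d\le g(X)-1$ one has $\dim W^0_d X=d$, while $(\Nm\varphi)^{-1}(M)$ is a fibre of $\Nm\varphi\colon\picd^d X\to\picd^d Y$, of dimension $g(X)-1$, inside the smooth $g(X)$-dimensional variety $\picd^d X$, so $\dim i_X(\Lambda)\ge d+(g(X)-1)-g(X)=d-1$. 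Combined with $\dim i_X(\Lambda)\le\dim\Lambda=d-1$ this forces $i_X|_\Lambda$ to be generically finite, hence $\dim|\mathcal O_X(D)|=0$ for general $D\in\Lambda$; then $\dim V=(d-1)-0=d-1$ by \eqref{equationdimensionspecial}.
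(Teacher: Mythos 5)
Your overall plan coincides with the paper's: reduce (a) to smoothness via the connectedness result \cite[Prop.14.1]{Nar92}, deduce (b) from reducedness of a general member of $|M|$, and conclude with \eqref{equationdimensionspecial}. (For $\dim|\sO_X(D)|=0$ the paper argues differently, via Mumford's exact sequence for $\varphi_*\sO_X(D)$ --- which is in fact the only place where the hypotheses $d\le g(X)-1$ and $M\not\simeq\delta_\varphi$ enter its proof --- but your intersection-dimension count is acceptable once irreducibility is in hand.) Your tangent-space computation is also correct, and it yields a cleaner smoothness criterion than the one the paper imports from \cite[Prop.14.3]{Nar92}: writing $R$ for the ramification divisor, $\Lambda$ is singular at $D$ exactly when $D\le R$, i.e.\ when $D$ is reduced and supported on the ramification locus.

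The genuine gap is the step you explicitly defer: you never rule out the existence of $D\in\Lambda$ with $D\le R$; you only list consequences of such a $D$ and announce that you ``would look for a contradiction.'' Worse, no such contradiction is available under the stated hypotheses. Take distinct ramification points $r_1,\dots,r_d$ and set $M:=\sO_Y\bigl(\varphi(r_1)+\dots+\varphi(r_d)\bigr)$; for $d<g(X)-1$ this $M$ satisfies every hypothesis of the lemma (it cannot be isomorphic to $\delta_\varphi$ for degree reasons), yet $D=r_1+\dots+r_d$ lies in $\Lambda$ and satisfies $D\le R$. One can see the singularity directly: in local coordinates $t_i$ at $r_i$ the composite $X^{(d)}\to\picd^d Y\simeq Y$ is $\sum_i t_i^2u_i(t_i)+\mathrm{const}$ with $u_i(0)\ne 0$, so $\Lambda$ has an ordinary quadratic singularity at $D$ (a node when $d=2$). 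Thus your criterion, applied honestly, shows that the smoothness assertion in (a) fails for these special $M$; this is precisely the exceptional case $A=B=0$ that the paper's proof dismisses when invoking \cite[Prop.14.3]{Nar92}, so the quoted form of that criterion deserves a second look rather than being taken on faith. To obtain what is actually needed downstream (uniqueness and irreducibility of the special subvariety of dimension $d-1$), you would have to either restrict $M$ to lie outside the finitely many norms of reduced ramification subdivisors, or prove irreducibility of $\Lambda$ directly --- e.g.\ by a monodromy argument for the $2^d$-sheeted cover $\Lambda\to|M|$ --- without passing through smoothness.
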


\begin{proof}
We start by showing the statement $b)$: by what precedes $D$ is simple, so by \cite[p.338]{Mum71} we have an exact sequence
\[
0 \rightarrow \sO_Y \rightarrow \varphi_* \sO_X(D) \rightarrow 
\sO_Y(\Nm \varphi(D)) \otimes \delta_\varphi^* \rightarrow 0.
\]
Since $\deg D \leq \deg \delta_\varphi$ and $\sO_Y(\Nm \varphi(D)) \simeq M \not\simeq \delta_\varphi$ we have 
$h^0(Y,  \sO_Y(\Nm \varphi(D)) \otimes \delta_\varphi^*)=0$.
Hence we have $1=h^0(Y, \sO_Y) = h^0(Y, \varphi_* \sO_X(D))$.

For the proof of $a)$ note first that since $\Lambda$ is connected, it is sufficient
to show the smoothness.
Let $D \in \Lambda$ be any divisor then we have a unique decomposition
$$
D = \varphi^* A + R + B,
$$
where $A$ is an effective divisor on $Y$, the divisor $R$ is effective 
with support contained in the ramification locus of $\varphi$ and $B$ is effective, simple
and has support disjoint from the ramification locus of $\varphi$.
Since $Y$ is an elliptic curve, we have
$$
h^0(Y, M \otimes \sO_Y(-A-\Nm \varphi(R))) 
= h^0(Y, M) - \deg (A+\Nm \varphi(R))
$$ 
unless $\deg M = \deg (A+\varphi_* R)$ and $M \otimes \sO_Y(-A-\Nm \varphi(R))$ is not trivial.
Since $\deg M=\deg D$ this last case could only happen when 
$A=0$ and $B=0$, so we have $D=R$. Yet by construction $M \simeq \sO_Y(\Nm \varphi(D)) = \sO_Y(\Nm \varphi(R))$,
so $M \otimes \sO_Y(-A-\Nm \varphi(R))$ is trivial.
By \cite[Prop.14.3]{Nar92} this shows the smoothness of $\Lambda$, the statement on the dimension
follows by $b)$ and Equation \eqref{equationdimensionspecial}.
\end{proof}

\subsection{The irreducible components of $V^2$}
\label{subsectionz2z2}

In this section $C$ will be a smooth curve of genus $g(C) \geq 6$ that is bielliptic, i.e.
we have a double cover \holom{p}{C}{E} onto an elliptic curve $E$.
As usual \holom{\pi}{\upC}{C} will be an \'etale double cover.
In this section we suppose that the covering \holom{p \circ \pi}{\upC}{E} is Galois. 
In this case one sees easily that the Galois group is $\Z_2 \times \Z_2$.

Using the Galois action on $\upC$ we get a commutative diagram\footnote{Our presentation follows \cite[Ch.5]{Deb88} to which we
refer for details.}
\[
\xymatrix{
& \tilde{C}  \ar[ld]_\pi \ar[d]^{\pi_1} \ar[rd]^{\pi_2} &
\\
C \ar[rd]_p & C_1  \ar[d]^{p_1} & C_2 \ar[ld]^{p_2}
\\
& E &
}
\]
It is straightforward to see that
\[
g(C_1)+g(C_2)=g(C)+1,
\]
and we will assume without loss of generality that $1 \leq g(C_1) \leq g(C_2) \leq g(C)$.
We denote by $\Delta$ the branch locus of $p$ and by $\delta$ the line bundle inducing the cyclic cover $p$.
Then we have $2 \delta \simeq \Delta$, moreover by the Hurwitz formula $\deg K_C = \deg \Delta$, hence
\[
\deg \delta = g(C)-1.
\]
Analogously the cyclic covers $p_1$ and $p_2$ are given by line bundles $\delta_1$ und $\delta_2$ such that
$\deg \delta_1 = g(C_1)-1$ and $\deg \delta_2 = g(C_2)-1$.

For any $a \in \Z$ we define closed subsets $Z_a \subset \pic C_1 \times \pic C_2$ by 
\[
\left\{
(L_1, L_2) \ | \ L_1 \in W^0_{g(C_1)-1+a} C_1, \ L_2  \in W^0_{g(C_2)-1-a} C_2, \ \Nm p_1(L_1) \otimes \Nm p_2(L_2) \simeq \delta
\right\}.
\]
We note that the sets $Z_a$ are empty unless $1-g(C_1) \leq a \leq g(C_2)-1$. 
Pulling back to $\upC$ we obtain natural maps
$$
\pull: Z_a \rightarrow \pic \upC, \ (L_1, L_2) \mapsto \pi_1^* L_1 \otimes \pi_2^* L_2
$$
and by \cite[p.230]{Deb88} the image  $\pull(Z_a)$ is in $P^-$ if and only if $a$ is odd.
Moreover we can argue as in \cite[Prop.5.2.1]{Deb88} to see that
\begin{equation} \label{v2inclusion1}
V^2 \subset  \pull(\bigcup_{a \ \mbox{\tiny odd}}Z_a).
\end{equation}

\begin{lemma} \label{lemmaza}
For $a$ odd the sets $Z_a$ are empty or one has
\begin{equation} \label{dimensionza}
\dim Z_a = g(C)-1-a.
\end{equation}
Moreover $Z_a$ is irreducible unless $g(C_1)=1$ and $a \geq g(C_2)-2$.
\end{lemma}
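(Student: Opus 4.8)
The plan is to identify $Z_a$, up to a finite map, with a product of two special subvarieties of the type studied in Lemma \ref{lemmaspecialoverelliptic}, and then read off dimension and irreducibility from that lemma. First I would rephrase the defining conditions. A point of $Z_a$ is a pair $(L_1, L_2)$ with $L_i$ effective of the indicated degree and $\Nm p_1(L_1) \otimes \Nm p_2(L_2) \simeq \delta$. Writing $L_1 = \sO_{C_1}(D_1)$ and $L_2 = \sO_{C_2}(D_2)$ for effective divisors, the norm condition says $\Nm p_1(D_1) + \Nm p_2(D_2) \in |\delta|$ on $E$. Since $\deg \delta = g(C)-1 \geq 5$, the linear system $|\delta|$ on the elliptic curve $E$ has dimension $g(C)-2$, is base-point free, and a general member is a reduced divisor. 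So the natural thing is to fix $M \in |\delta|$ and split it as $M = M_1 + M_2$ with $\deg M_1 = g(C_1)-1+a$ and $\deg M_2 = g(C_2)-1-a$; as $M$ varies in $|\delta|$ and the splitting varies, the pair $(D_1, D_2)$ with $\Nm p_1(D_1) \in |M_1|$, $\Nm p_2(D_2) \in |M_2|$ fibres $Z_a$ over a parameter space of such data.

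The cleaner route, which I would actually pursue, is to fix the \emph{line bundle} $N_1 := \Nm p_1(L_1)$ on $E$: then $N_2 := \Nm p_2(L_2) \simeq \delta \otimes N_1^*$ is determined, and $Z_a$ maps to the space of admissible $N_1 \in \picd^{g(C_1)-1+a} E$, with fibre over $N_1$ equal to (the pullbacks of) the product $\Lambda^{(1)}_{N_1} \times \Lambda^{(2)}_{N_2}$, where $\Lambda^{(i)}_{N_i} := \fibre{(\Nm p_i)}{|N_i|} \subset C_i^{(\deg N_i)}$ is exactly the scheme $\Lambda$ attached to the ramified double cover $p_i : C_i \to E$ and the line bundle $N_i$ on $E$. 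Now apply Lemma \ref{lemmaspecialoverelliptic}: provided $N_i \not\simeq \delta_i$ and $2 \leq \deg N_i \leq g(C_i)-1$, the scheme $\Lambda^{(i)}_{N_i}$ is smooth irreducible, a general divisor in it is simple with $h^0 = 1$, and the associated special subvariety is irreducible of dimension $\deg N_i - 1$. The degree conditions: $\deg N_1 = g(C_1)-1+a$ and $\deg N_2 = g(C_2)-1-a$. With $a$ odd (so $a \geq 1$) and $a \leq g(C_2)-1$ these are in the right ranges, \emph{except} at the boundary values $\deg N_i \in \{0,1\}$ or $\deg N_i = g(C_i)$, and except when $\deg N_i = g(C_i)-1$ forces $N_i = \delta_i$; these boundary cases are precisely what produces the exceptional clause $g(C_1)=1$ and $a \geq g(C_2)-2$. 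A dimension count then gives $\dim Z_a = \dim \picd E + (\deg N_1 - 1) + (\deg N_2 - 1) = 1 + (g(C_1)+g(C_2)-2) - a - 1 = g(C)-1-a$, using $g(C_1)+g(C_2) = g(C)+1$.

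For irreducibility: the base $\picd^{g(C_1)-1+a} E$ is irreducible, and over the locus where both degree/non-triviality hypotheses of Lemma \ref{lemmaspecialoverelliptic} hold (an open dense subset of the base, away from the exceptional range) the fibre $\Lambda^{(1)}_{N_1} \times \Lambda^{(2)}_{N_2}$ is irreducible of constant dimension; a morphism with irreducible base and irreducible equidimensional general fibre has irreducible total space over that open set, and since every component of $Z_a$ has dimension $g(C)-1-a$ (again by the special-subvariety dimension bounds, which hold fibrewise including over the degenerate locus, where the fibre dimension can only drop), no extra component can be hiding over the bad locus. When $g(C_1)=1$, the curve $C_1$ is itself elliptic, $p_1$ is a degree-two isogeny-type cover, and for $a \geq g(C_2)-2$ the degree $\deg N_1 = a \geq g(C_2)-2$ may be large relative to $g(C_2)$ while $\deg N_2 = g(C_2)-1-a$ drops to $1$ or $0$, so Lemma \ref{lemmaspecialoverelliptic}(a) no longer applies to the $C_2$-factor and one genuinely expects $\Lambda^{(2)}$, hence $Z_a$, to disconnect — this is why the lemma only claims irreducibility outside that range.

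The main obstacle I anticipate is bookkeeping the boundary cases cleanly: I need to check exactly for which odd $a$ the degrees $g(C_1)-1+a$ and $g(C_2)-1-a$ land in the interval $[2, g(C_i)-1]$ with the relevant line bundle $\neq \delta_i$, verify that outside the stated exceptional range Lemma \ref{lemmaspecialoverelliptic} applies to \emph{both} factors for generic $N_1$, and confirm that in the degenerate fibres the dimension still does not exceed $g(C)-1-a$ (so that the generic-fibre argument suffices for irreducibility). The transfer from the product $\Lambda^{(1)} \times \Lambda^{(2)}$ on symmetric products to $Z_a \subset \pic C_1 \times \pic C_2$ via the Abel-Jacobi maps is a finite morphism that does not affect irreducibility or dimension, by the "$h^0 = 1$ for a general divisor" part of Lemma \ref{lemmaspecialoverelliptic}(b) applied to each factor, so that part is routine.
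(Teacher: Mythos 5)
Your overall strategy --- fibering $Z_a$ over the norm $N_1=\Nm p_1(L_1)\in\picd^{g(C_1)-1+a}E$ and analysing the two factors of the fibre separately --- could be made to work, but the execution has a genuine error that invalidates the dimension count. For every odd $a\geq 1$ the first factor has $\deg N_1=g(C_1)-1+a\geq g(C_1)$, which lies \emph{outside} the range $2\leq d\leq g(X)-1$ of Lemma \ref{lemmaspecialoverelliptic}; this is not a boundary case but the generic situation (symmetrically, for $a<0$ it is the second factor that escapes the range). For that factor a general line bundle of degree $g(C_1)-1+a$ is non-special with $\dim|L_1|=a-1$, so the set of effective $L_1$ with $\Nm p_1(L_1)\simeq N_1$ is the \emph{entire} fibre of $\Nm p_1$ on $\picd^{g(C_1)-1+a}C_1$, of dimension $g(C_1)-1$ rather than $\deg N_1-1$, and the Abel--Jacobi map $\Lambda^{(1)}_{N_1}\to\pic C_1$ is not finite once $a\geq 3$. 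Your displayed computation $1+(\deg N_1-1)+(\deg N_2-1)$ actually equals $g(C)-2$ for every $a$, not $g(C)-1-a$; the intermediate algebra was forced to the stated answer. The correct count is $1+(g(C_1)-1)+(g(C_2)-1-a-1)=g(C)-1-a$, which is in substance what the paper does: it projects $Z_a$ onto the second factor, $Z_a\to W^0_{g(C_2)-1-a}C_2$, with fibres the $\Nm p_1$-fibres (automatically consisting of effective bundles since $\deg L_1\geq g(C_1)$).

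Second, the irreducibility of the large-degree factor is not covered by Lemma \ref{lemmaspecialoverelliptic}(a); it holds because $p_1$ is \emph{ramified} when $g(C_1)\geq 2$ (as $\deg\delta_1=g(C_1)-1>0$), so the fibres of $\Nm p_1$ are connected, hence irreducible translates of $\ker\Nm p_1$. You never invoke ramifiedness, and it is precisely the point that fails in the exceptional case: for $g(C_1)=1$ the cover $p_1$ is \'etale, its norm fibres have two connected components, and one must instead obtain irreducibility from the $C_2$-side special subvarieties, which by Lemma \ref{lemmaspecialoverelliptic} requires $2\leq\deg N_2=g(C_2)-1-a$, i.e.\ $a\leq g(C_2)-3$. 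Your explanation of the exceptional clause (``$\deg N_2$ drops to $1$ or $0$'') cannot be the real reason, since the same degree drop occurs for $g(C_1)\geq 2$ and $a\geq g(C_2)-2$, where the lemma nonetheless asserts irreducibility.
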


\begin{proof}
{\em 1st case. $g(C_1)>1$.} 
We prove the statement for positive $a$, for $a$ negative the argument is analogous.
The projection on the second factor gives a surjective map $Z_a \rightarrow W^0_{g(C_2)-1-a} C_2$,
the fibres of this map being parametrized by effective line bundles $L_1$ with fixed norm.
Since $a \geq 1$ the line bundles $L_1$ are of degree at least $g(C_1)$, so they are automatically effective. 
Thus the fibres identify to fibres of the norm map $\Nm p_1: \pic C_1 \rightarrow \pic E$.
Since the double covering $p_1$ is ramified, the $\Nm p_1$-fibres are irreducible of dimension $g(C_1)-1$, 
so $Z_a$ is irreducible of the expected dimension.

{\em 2nd case. $g(C_1)=1$.} The sets $Z_a$ are empty for $a$ negative, so suppose $a$ positive.
Arguing as in the first case we obtain the statement on the dimension.
In order to see that $Z_a$ is irreducible for $a \leq g(C_2)-3$ we consider the surjective map induced
by the projection on the first factor $Z_a \rightarrow \picd^{g(C_1)-1+a} C_1$. The fibre over a line bundle $L_1$
is the union of the $p_2$-special subvarieties associated to $\delta \otimes \Nm p_1 L_1^*$. Since
$2 \leq \deg \delta \otimes \Nm p_1(L_1^*) \leq g(C_2)-2$ we know by Lemma \ref{lemmaspecialoverelliptic} 
that the unique special subvariety is irreducible, so the fibres are irreducible.
\end{proof}

Since all the irreducible components of $V^2$ have dimension $g(C)-4$ 
if follows from \eqref{v2inclusion1} and \eqref{dimensionza} that
\begin{equation} \label{v2inclusion}
V^2 \subset \pull(\bigcup_{a \ \mbox{\tiny odd}, |a| \leq 3} Z_a).
\end{equation}
If $(L_1,L_2) \in Z_{\pm 3}$ then by Riemann-Roch $\dim |L_1| \geq 2$ (resp. $\dim |L_2| \geq 2$), so we have
$$
\pull(Z_{\pm 3}) \subset V^2.
$$ 
For the sets $Z_{\pm 1}$ this can't be true, since Equation \eqref{dimensionza} shows that 
they have dimension $g(C)-2$. We introduce the following smaller loci:
\[
W_1 := 
\left\{
(L_1, L_2) \in Z_1  \ | \ L_1 \in W^1_{g(C_1)} C_1
\right\},
\]
and 
\[
W_{-1} := 
\left\{
(L_1, L_2) \in Z_{-1}  \ | \ L_2 \in W^1_{g(C_2)} C_2
\right\}.
\]
We note that if $g(C_1)=1$, then $W_1=\emptyset$: there is no $g^1_1$ on a non-rational curve.
Since $\dim W^1_{g(C_1)} C_1=g(C_1)-2$ (resp. $\dim W^1_{g(C_2)} C_1=g(C_2)-2$)  
one deduces easily from the proof of Lemma \ref{lemmaspecialoverelliptic} that the sets $W_{\pm 1}$ are empty or
irreducible of dimension $g(C)-4$.

By the same lemma we see that for fixed $L_1$ (resp. $L_2$) and general $L_2$ (resp. $L_1$) such that
$(L_1, L_2) \in W_1$  (resp. $(L_1, L_2) \in W_{-1}$), the linear system $|L_1|$ (resp. $|L_2|$) contains a unique effective
divisor and this divisor is simple.

Note that if $(L_1,L_2) \in W_{\pm 1}$, then $\dim |\pull(L_1,L_2)| \geq 1$. Since these sets map into the
component $P^-$ we obtain
$$
\pull(W_{\pm 1}) \subset V^2.
$$
\begin{proposition} \label{propositioncomponents}
We have
$$
V^2 = \pull(Z_{-3} \cup W_{-1} \cup W_{1} \cup Z_{3}).
$$
\end{proposition}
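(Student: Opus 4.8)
The plan is to prove the non‑trivial inclusion $V^2\subseteq\pull(Z_{-3}\cup W_{-1}\cup W_1\cup Z_3)$, the reverse one being already established above. I would argue component by component. Each irreducible component $Y$ of $V^2$ has dimension $g(C)-4$, and $Z_a$ is projective, so $\pull(Z_a)$ is closed and irreducible by Lemma \ref{lemmaza}; by \eqref{v2inclusion}, $Y$ is contained in one of $\pull(Z_{\pm 3})$, $\pull(Z_{\pm 1})$. Since $\dim Y=g(C)-4=\dim Z_{\pm 3}$, a component contained in $\pull(Z_{\pm 3})$ coincides with it. Using the symmetry exchanging $C_1\leftrightarrow C_2$ (which swaps $Z_1\leftrightarrow Z_{-1}$, $W_1\leftrightarrow W_{-1}$, $Z_3\leftrightarrow Z_{-3}$), it then remains to show that any component $Y\subseteq\pull(Z_1)$ lies in $\pull(W_1\cup Z_3\cup Z_{-3}\cup W_{-1})$.

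The main computational tool I would prepare is a base‑change identity. Since $\pi$ is \'etale, the three intermediate double covers of the $\Z_2\times\Z_2$‑cover $p\circ\pi\colon\upC\to E$ have pairwise disjoint branch loci on $E$, so $\upC\cong C_1\times_E C_2$ and $\pi_1$ is the base change of $p_2$ along $p_1$; flat base change together with the projection formula then gives
\[
h^0(\upC,\pi_1^*N_1\otimes\pi_2^*N_2)=h^0\!\bigl(E,(p_1)_*N_1\otimes(p_2)_*N_2\bigr)
\]
for all $N_i\in\pic C_i$. For $(L_1,L_2)\in Z_1$ the sheaves $(p_1)_*L_1$ and $(p_2)_*L_2$ are rank‑two bundles on the elliptic curve $E$ of degrees $1$ and $-1$, so their tensor product has Euler characteristic zero; since $V^4\subsetneq V^2$, a general point $L=\pull(L_1,L_2)$ of $Y$ has $h^0(\upC,L)=3$.

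For such a general $L$ I would distinguish cases. If $h^0(C_1,L_1)\ge 2$ then $(L_1,L_2)\in W_1$, hence $Y\subseteq\pull(W_1)$. If $h^0(C_2,L_2)\ge 2$ then $L_2\in W^1_{g(C_2)-2}C_2$; as $g(C_2)\ge 4$ and $C_2$ is bielliptic, the Castelnuovo--Severi inequality shows $C_2$ is not hyperelliptic, so Martens' theorem \cite[IV, Thm.5.1]{ACGH85} forces $\dim W^1_{g(C_2)-2}C_2\le g(C_2)-5$, and the corresponding locus of pairs in $Z_1$ then has dimension $<\dim Y$ --- impossible for a general point. So one may assume $h^0(C_i,L_i)=1$, with unique effective divisors $D_i\in|L_i|$. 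The decisive fact is that the rank‑two degree $-1$ bundle $(p_2)_*L_2$ has a sub‑line‑bundle of degree $\ge 1$ exactly when $D_2$ is not simple, since $\Hom(\sO_E(y),(p_2)_*L_2)=H^0(C_2,L_2\otimes p_2^*\sO_E(-y))$. If $D_2$ is not simple, say $D_2\ge p_2^*y$, then using $\pi_1^*p_1^*\sO_E(y)=\pi_2^*p_2^*\sO_E(y)$ one rewrites $L=\pull(L_1\otimes p_1^*\sO_E(y),\,L_2\otimes p_2^*\sO_E(-y))$, and the resulting pair lies in $Z_3$ (degrees $g(C_1)+2$ and $g(C_2)-4$, unchanged norm, effective second factor); hence $Y\subseteq\pull(Z_3)$. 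If $D_2$ is simple, then $\sO_E$ is the maximal destabilizing sub‑line‑bundle of $(p_2)_*L_2$, and tensoring the extension $0\to\sO_E\to(p_2)_*L_2\to\sO_E(-z)\to 0$ by $(p_1)_*L_1$ yields $h^0(\upC,L)\le 1+h^0(C_1,L_1\otimes p_1^*\sO_E(-z))$, which is at most $2$ unless $D_1\ge p_1^*z$ and the residual divisor $D_1-p_1^*z$ moves in a pencil; rewriting $L$ as a pullback of a pair in $Z_{-1}$ and using the norm relation to determine $z$ in terms of that residual bundle, one finds such $L$ confined to a locus of dimension $<\dim Y$ (again via Martens, and via the emptiness of $W^1_{g(C_1)-2}C_1$ when $g(C_1)\le 3$). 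Thus the case ``$D_2$ simple'' produces no general point, and the proof is complete.

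I expect this last case --- $h^0(C_i,L_i)=1$ with $D_2$ simple --- to be the main obstacle: it requires analysing the rank‑two bundle $(p_2)_*L_2$ on $E$ finely enough (through its Harder--Narasimhan filtration) to see that $h^0(\upC,L)\ge 3$ can hold only when $D_1$ is non‑simple, and then running the dimension count with the constraint $\Nm p_1(L_1)\otimes\Nm p_2(L_2)\simeq\delta$ while treating separately the small‑genus cases $g(C_1)\le 3$, in which the relevant Brill--Noether locus on $C_1$ is empty. A subsidiary issue is to formulate the $C_1\leftrightarrow C_2$ symmetry precisely enough that the $Z_{-1}$‑side is genuinely a relabelling, so that no component is counted twice.
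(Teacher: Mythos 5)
Your proposal is correct, and its skeleton coincides with the paper's: both start from the inclusion \eqref{v2inclusion}, both use the observation that a non-simple divisor in $|L_2|$ lets one shift a fibre $p_2^*y$ from the $C_2$-side to the $C_1$-side and land in $Z_{3}$, and both must then show that in the remaining (simple) case $h^0(\upC,L)\geq 3$ forces $\dim|L_1|\geq 1$. The difference lies in how this last implication is obtained. The paper quotes Lemma \ref{lemmatechnical} (\cite[Cor.5.2.8]{Deb88}), which for $L_2$ a $p_2$-simple line bundle gives the sharp pointwise bound $h^0(\upC,L)\leq 2h^0(C_1,L_1)+g(C_1)-1-\deg L_1=2h^0(C_1,L_1)-1$, so $h^0(\upC,L)\geq 3$ immediately yields $h^0(C_1,L_1)\geq 2$ for \emph{every} such $L$. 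You instead re-derive an estimate from scratch via $\upC\cong C_1\times_E C_2$, the identity $h^0(\upC,L)=h^0\bigl(E,(p_1)_*L_1\otimes(p_2)_*L_2\bigr)$ and the Harder--Narasimhan filtration of $(p_2)_*L_2$; this gives the slightly weaker bound $h^0(\upC,L)\leq h^0(C_1,L_1)+h^0(C_1,L_1\otimes p_1^*\sO_E(-z))\leq 2h^0(C_1,L_1)$, which leaves the residual case $h^0(C_1,L_1)=1$ with $L_1\otimes p_1^*\sO_E(-z)\in W^1_{g(C_1)-2}C_1$ to be excluded by a Martens-type dimension count (and similarly for the case $h^0(C_2,L_2)\geq 2$, which the paper's lemma absorbs automatically). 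Consequently your argument only controls the general point of each irreducible component rather than every $L\in V^2$; this still suffices for the set-theoretic equality claimed. The price is the extra generic-dimension bookkeeping and the separate treatment of small $g(C_1)$; the gain is that the appeal to Debarre's Cor.~5.2.8 is replaced by a self-contained elementary argument. The dimension counts you sketch do close up (the relevant loci have dimension at most $g(C)-5<g(C)-4$), so I see no gap.
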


The proof needs some technical preparation:

\begin{definition} \label{definitionsimplelinebundle}
Let \holom{\varphi}{X}{Y} be a double cover of smooth curves.
Let $L$ be a line bundle on $X$ such that $\dim |L| \geq 1$. The line bundle $L$ is simple if every divisor in $D \in |L|$
is simple in the sense of Definition \ref{definitionsimple}.  
\end{definition}

\begin{lemma} \label{lemmatechnical} \cite[Cor.5.2.8]{Deb88}
In our situation let $L_1 \in \pic C_1$ and $L_2 \in \pic C_2$ be effective line bundles such
that $L \simeq \pi_1^* L_1 \otimes \pi_2^* L_2$.
If $L_1$ is $p_1$-simple, then
\[
h^0(\upC, L) \leq 2 h^0(C_2, L_2)+g(C_2)-1-\deg L_2.
\]
Analogously if $L_2$ is  $p_2$-simple, then
\[
h^0(\upC, L) \leq 2 h^0(C_1, L_1)+g(C_1)-1-\deg L_1.
\]
\end{lemma}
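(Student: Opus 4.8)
The plan is to compute $h^0(\upC,L)$ by pushing $L$ forward to $C_2$ and transporting the $p_1$-simplicity of $L_1$ across the elliptic base $E$ via Mumford's sequence. I will prove the first inequality; the second follows verbatim after exchanging the indices $1$ and $2$.

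First I would record the geometry of the $\Z_2 \times \Z_2$-cover $q := p \circ \pi = p_1 \circ \pi_1 = p_2 \circ \pi_2 \colon \upC \to E$. Since $\pi$ is \'etale, its involution $\sigma$ acts freely, so no inertia group of $q$ contains $\sigma$; hence $q$ has only simple ramification, and over each branch point exactly one of $\pi_1, \pi_2$ ramifies. A short inspection then shows that the branch loci of $p_1$ and $p_2$ on $E$ are \emph{disjoint}, that $\delta \simeq \delta_1 \otimes \delta_2$ in $\pic E$, and that $K_{C_2} \simeq p_2^* \delta_2$. Disjointness of the branch loci forces the fibre product $C_1 \times_E C_2$ to be smooth, so the natural map $\upC \to C_1 \times_E C_2$ is an isomorphism and the square formed by $\pi_1, \pi_2, p_1, p_2$ (the top square of the tower above) is Cartesian with $p_2$ flat.

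Next, by the projection formula $(\pi_2)_* L = L_2 \otimes (\pi_2)_* \pi_1^* L_1$, and flat base change along the Cartesian square gives $(\pi_2)_* \pi_1^* L_1 \simeq p_2^* (p_1)_* L_1$. Because $L_1$ is $p_1$-simple it admits a simple divisor $D_1 \in |L_1|$, so Mumford's exact sequence, exactly as in the proof of Lemma \ref{lemmaspecialoverelliptic}, reads
\[
0 \to \sO_E \to (p_1)_* L_1 \to \Nm p_1(L_1) \otimes \delta_1^* \to 0 .
\]
Pulling back by the flat map $p_2$ and tensoring with $L_2$ produces on $C_2$ the sequence
\[
0 \to L_2 \to (\pi_2)_* L \to \mathcal{N} \to 0, \qquad \mathcal{N} := L_2 \otimes p_2^* \Nm p_1(L_1) \otimes p_2^* \delta_1^*,
\]
whence $h^0(\upC, L) = h^0(C_2, (\pi_2)_* L) \leq h^0(C_2, L_2) + h^0(C_2, \mathcal{N})$.

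Finally I would identify the quotient. Writing $j_2$ for the covering involution of $p_2$, one has $p_2^* \Nm p_2(L_2) = L_2 \otimes j_2^* L_2$; combining this with the defining relation $\Nm p_1(L_1) \otimes \Nm p_2(L_2) \simeq \delta$ of $Z_a$, the identity $\delta \simeq \delta_1 \otimes \delta_2$, and $K_{C_2} \simeq p_2^* \delta_2$, a direct computation collapses $\mathcal{N}$ to $K_{C_2} \otimes j_2^* L_2^*$. Since $j_2$ is an automorphism fixing $K_{C_2}$, Riemann--Roch gives $h^0(C_2, \mathcal{N}) = h^0(C_2, K_{C_2} \otimes L_2^*) = h^0(C_2, L_2) + g(C_2) - 1 - \deg L_2$, and substituting yields $h^0(\upC, L) \leq 2 h^0(C_2, L_2) + g(C_2) - 1 - \deg L_2$. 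The hard part is entirely in the first step: one must deduce from the \'etaleness of $\pi$ that the two branch loci on $E$ are disjoint, so that the square is genuinely Cartesian and base change applies, and one must track carefully the $2$-torsion ambiguities in $\delta \simeq \delta_1 \otimes \delta_2$ and $K_{C_2} \simeq p_2^* \delta_2$, since these feed directly into the identification $\mathcal{N} \simeq K_{C_2} \otimes j_2^* L_2^*$ on which the whole estimate turns.
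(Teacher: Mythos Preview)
The paper does not prove this lemma; it simply cites \cite[Cor.~5.2.8]{Deb88}. Your argument is correct and is almost certainly the one Debarre gives: push $L$ down to $C_2$, use the Cartesian square to replace $(\pi_2)_*\pi_1^*$ by $p_2^*(p_1)_*$, feed in Mumford's sequence for the simple divisor on $C_1$, and then identify the quotient $\mathcal N$ with $K_{C_2}\otimes j_2^*L_2^*$ so that Riemann--Roch finishes. Your verification that the branch loci of $p_1$ and $p_2$ on $E$ are disjoint (from the freeness of $\sigma$) is exactly what makes the fibre product smooth and the base-change step legitimate, and the relation $\delta\simeq\delta_1\otimes\delta_2$ holds on the nose, not merely up to $2$-torsion, because the eigenbundle decomposition $q_*\sO_{\upC}\simeq\sO_E\oplus\delta^*\oplus\delta_1^*\oplus\delta_2^*$ forces the three nontrivial characters of $\Z_2\times\Z_2$ to multiply to the trivial one.

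One point worth making explicit: the relation $\Nm p_1(L_1)\otimes\Nm p_2(L_2)\simeq\delta$ that you invoke is not stated among the hypotheses of the lemma, but it is part of ``our situation'': the lemma is only applied to $(L_1,L_2)\in Z_a$, and more intrinsically the condition is equivalent to $\Nm\pi(L)\simeq K_C$, which holds throughout Section~5 since $L\in P^-$. Without it the identification $\mathcal N\simeq K_{C_2}\otimes j_2^*L_2^*$ fails and the bound is false in general, so you are right to use it; just flag clearly that this is what ``in our situation'' is supplying.
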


\begin{proof}[Proof of Proposition \ref{propositioncomponents}]
Let $L \in V^2$ be an arbitrary line bundle. By the inclusion \eqref{v2inclusion} we are left to show that if 
$L \in \pull(Z_{\pm 1})$ and $L \not\in \pull(Z_{-3} \cup Z_{3})$, then $L \in \pull(W_{\pm 1})$.
We will suppose that $L \in \pull(Z_{1})$, the other case is analogous and left to the reader.
Since $L \in \pull(Z_1)$, we can write
\[
L \simeq \pi_1^* L_1 \otimes \pi_2^*L_2
\]
with $L_1$ effective of degree $g(C_1)$    
and $L_2$ effective of degree $g(C_1)-2$. 
If $L_2$ is not simple, then $L$ is in $\pull(Z_3)$ which we excluded.
Hence $L_2$ is simple, so by Lemma \ref{lemmatechnical} we obtain
\[
3 \leq h^0(\upC, L) \leq 2 h^0(C_1, L_1)+g(C_1)-1-g(C_1).
\]
Thus one has $\dim |L_1| \geq 1$ and $L \in \pull(W_{1})$.
\end{proof}

\begin{corollary} \label{corollarygone}
If $g(C_1)=1$, then 
$$
V^2 = \pull(Z_{3}).
$$
In particular $V^2$ is irreducible.
\end{corollary}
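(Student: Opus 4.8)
The plan is to read off the statement from Proposition \ref{propositioncomponents} by checking that, when $g(C_1)=1$, three of the four pieces appearing there are empty. First I would record the numerical consequence of $g(C_1)=1$: since $g(C_1)+g(C_2)=g(C)+1$ and $1\le g(C_1)\le g(C_2)$, we get $g(C_2)=g(C)$. Next, recall from the discussion preceding Lemma \ref{lemmaza} that $Z_a$ is empty unless $1-g(C_1)\le a\le g(C_2)-1$; with $g(C_1)=1$ this forces $0\le a\le g(C)-1$, so in particular $Z_{-3}=\emptyset$ and $Z_{-1}=\emptyset$. Since $W_{-1}\subset Z_{-1}$, this gives $W_{-1}=\emptyset$, and $W_1=\emptyset$ was already observed (there is no $g^1_1$ on a curve of positive genus). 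Plugging these into Proposition \ref{propositioncomponents} yields $V^2=\pull(Z_3)$.

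For the irreducibility claim I would invoke Lemma \ref{lemmaza}: the set $Z_a$ (for $a$ odd) is irreducible except in the case $g(C_1)=1$ and $a\ge g(C_2)-2$. Here $a=3$ and $g(C_2)-2=g(C)-2\ge 4$ by the standing hypothesis $g(C)\ge 6$, so $3<g(C_2)-2$ and the exceptional case does not occur; hence $Z_3$ is irreducible. Finally, $V^2$ is non-empty (its expected dimension $g(C)-4$ is positive), so $Z_3$ is non-empty, and $V^2=\pull(Z_3)$ is the image of an irreducible set under the morphism $\pull$, hence irreducible.

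There is no serious obstacle here; the only point requiring care is the bookkeeping of which $Z_a$ and $W_{\pm 1}$ vanish under $g(C_1)=1$, and verifying that the exceptional (possibly reducible) range in Lemma \ref{lemmaza} is avoided precisely because $g(C)\ge 6$. If desired one can also note that the two remaining components $W_{-1}$ and $Z_{-3}$ would in any case have dimension strictly smaller than $g(C)-4$ or be contained in $\pull(Z_3)$, but the emptiness argument above is cleaner.
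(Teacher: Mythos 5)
Your proposal is correct and follows exactly the paper's own argument: the emptiness of $Z_{-3}$, $W_{-1}$ and $W_1$ for $g(C_1)=1$ combined with Proposition \ref{propositioncomponents}, and then the irreducibility criterion of Lemma \ref{lemmaza} using $g(C_2)=g(C)\geq 6$. Your write-up merely fills in the bookkeeping that the paper leaves implicit.
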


\begin{proof}
Since the sets $Z_{-3}, W_{-1}$ and $W_{1}$ are empty for $g(C_1)=1$, the first statement is immediate
from Proposition \ref{propositioncomponents}. Since $g(C_1)=1$ implies that $g(C_2)=g(C)$ and $g(C) \geq 6$ 
by hypothesis we know by Lemma \ref{lemmaza} that $Z_3$ is irreducible.
\end{proof}

We will now focus on the case $g(C_1) \geq 2$.
Proposition \ref{propositioncomponents} reduces the study of $V^2$ to understanding the sets
$W_{\pm 1}, Z_{\pm 3}$ and their images in $P^-$. We start with an observation:

\begin{lemma} \label{lemmaidentify}
For $g(C_1) \geq 2$ we have
\[
\pull(W_1) = \pull(W_{-1}).
\]
\end{lemma}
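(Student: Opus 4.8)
The plan is to show both inclusions by producing, for a generic point of $W_1$, a corresponding point of $W_{-1}$ mapping to the same line bundle on $\upC$, and vice versa; since $\pull(W_{\pm 1})$ are closed and irreducible of the same dimension $g(C)-4$, a generic identification suffices. Take $(L_1,L_2)\in W_1$ generic, so $L_1\in W^1_{g(C_1)}C_1$, $\deg L_2 = g(C_1)-2 = g(C_2)-1-(g(C)+1-2g(C_1))$... more to the point $L_2$ is effective with $\dim|L_2|=0$, $L_2$ is $p_2$-simple (by the remark following Lemma \ref{lemmaspecialoverelliptic}, or rather by the discussion after the definition of $W_{\pm1}$), and $\Nm p_1(L_1)\otimes \Nm p_2(L_2)\simeq \delta$. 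Set $L:=\pi_1^*L_1\otimes\pi_2^*L_2$.

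\textbf{Key steps.} First I would compute $h^0(\upC,L)$ precisely for generic $(L_1,L_2)\in W_1$: since $L_2$ is $p_2$-simple, Lemma \ref{lemmatechnical} gives $h^0(\upC,L)\le 2h^0(C_1,L_1) + g(C_1)-1-\deg L_1 = 2\cdot 2 + g(C_1)-1-g(C_1) = 3$, and since $L\in V^2$ we get $h^0(\upC,L)=3$, i.e. $\dim|L|=2$. Now decompose $L$ with respect to the other pullback: I want to write $L\simeq \pi_1^*L_1'\otimes\pi_2^*L_2'$ with $(L_1',L_2')\in Z_{-1}$, i.e. $\deg L_2' = g(C_2)$, $\deg L_1' = g(C_1)-2$, $L_2'$ effective, and then show $L_2'\in W^1_{g(C_2)}C_2$, i.e. $\dim|L_2'|\ge 1$. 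The existence of such a decomposition should follow from the structure of $\pic\upC$ as an extension governed by $\pi_1,\pi_2$ (the $\Z_2\times\Z_2$-action): $L$ is in $\pull(Z_{-1})$ because it lies in $\pull(\bigcup_{a\text{ odd}}Z_a)$ by \eqref{v2inclusion1} and, having $\dim|L|=2$ with $\deg$ constraints, the only odd $a$ that can contribute besides $\pm 1$ are $\pm 3$; if $L\in\pull(Z_3)$ we are already inside $\pull(Z_3)$ and a separate (easier) identification applies, so the interesting case is $L\in\pull(Z_{-1})$. Then apply Lemma \ref{lemmatechnical} in the other direction: if $L_1'$ is $p_1$-simple we would get $h^0(\upC,L)\le 2h^0(C_2,L_2') + g(C_2)-1-\deg L_2'$, forcing $h^0(C_2,L_2')\ge 2$, i.e. $(L_1',L_2')\in W_{-1}$, as desired. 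If $L_1'$ is not $p_1$-simple, then $L\in\pull(Z_{-3})$, contradicting the genericity/dimension count (or $L$ lies in the overlap which is lower-dimensional).

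\textbf{Main obstacle.} The delicate point is controlling the ambiguity in writing $L\simeq\pi_1^*L_1\otimes\pi_2^*L_2$: the map $\pull$ is not injective, so I must argue that for $L\in\pull(W_1)$ generic there genuinely is a representative of the form required for $W_{-1}$, rather than only representatives with $L_2'$ non-effective or $L_1'$ non-simple. Concretely one has to understand the fibres of $\pull$ restricted to the relevant $Z_a$'s — these are governed by the kernel of $(\pi_1^*,\pi_2^*)$, which involves the two-torsion classes pulled back from $E$. I expect the cleanest route is: show $\dim\pull(W_1) = \dim W_1 = g(C)-4$ (using that the generic fibre of $\pull|_{W_1}$ is finite, which follows since a generic $L$ in the image has $\dim|L|=2$ exactly and Lemma \ref{lemmatechnical} is sharp), show symmetrically $\dim\pull(W_{-1})=g(C)-4$, and then prove $\pull(W_1)\subset\pull(W_{-1})$ by the above argument applied to a generic point; equality of irreducible closed sets of equal dimension then gives $\pull(W_1)=\pull(W_{-1})$. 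A technically careful version of "if $L_2'$ is not simple then $L\in\pull(Z_3)$, if $L_1'$ is not simple then $L\in\pull(Z_{-3})$" — exactly the dichotomy used already in the proof of Proposition \ref{propositioncomponents} — is what makes the generic point land in $W_{-1}$.
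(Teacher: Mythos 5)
Your overall strategy (rewrite a generic $L\in\pull(W_1)$ in bidegree $(g(C_1)-2,\,g(C_2))$ and apply Lemma \ref{lemmatechnical} in the other direction) has the right shape, and the computation $h^0(\upC,L)=3$ for generic $(L_1,L_2)\in W_1$ is correct. But there is a genuine gap at exactly the step you flag as the ``main obstacle'' and then do not close: you never produce the required decomposition $L\simeq\pi_1^*L_1'\otimes\pi_2^*L_2'$ with $(L_1',L_2')\in Z_{-1}$, i.e.\ with \emph{both} factors effective. The inclusion \eqref{v2inclusion1} only says that $L$ lies in $\pull(Z_a)$ for \emph{some} odd $a$ --- here $a=1$, which you already know --- so ``the interesting case is $L\in\pull(Z_{-1})$'' assumes precisely what has to be proved. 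Moreover the ambiguity of $\pull$ is not the relevant mechanism: two-torsion pulled back from $E$ cannot change the bidegree at all. To pass from bidegree $(g(C_1),g(C_2)-2)$ to $(g(C_1)-2,g(C_2))$ one must twist by $p_1^*N^*\otimes p_2^*N$ for some $N\in\picd^1 E$ (using $\pi_1^*p_1^*=\pi_2^*p_2^*$), and one has to exhibit a particular $N$ for which $L_1\otimes p_1^*N^*$ is still effective; nothing in your argument rules out that every representative of $L$ in the new bidegree has a non-effective first factor.

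The missing idea --- and the actual content of the paper's proof --- is a structure result for the Brill--Noether locus $W^1_{g(C_1)}C_1$ of the bielliptic curve $C_1$: a general $L_1\in W^1_{g(C_1)}C_1$ is \emph{not} $p_1$-simple, and can be written $L_1\simeq p_1^*\sO_E(x)\otimes\sO_{C_1}(D_1)$ with $D_1$ effective and $\sO_E(\Nm p_1(D_1)+x)\simeq\delta_1$. (The paper proves this by parametrising such pairs $(x,D_1)$ by an incidence variety $S\simeq C_1^{(g(C_1)-2)}$, showing via Mumford's exact sequence for the push-forward that the resulting bundles have exactly two sections, and deducing surjectivity onto the irreducible $(g(C_1)-2)$-dimensional $W^1_{g(C_1)}C_1$ from the fact that $S$ is not uniruled.) With this in hand the transfer is immediate: $L\simeq\pi_1^*\sO_{C_1}(D_1)\otimes\pi_2^*(L_2\otimes p_2^*\sO_E(x))$, the norm condition gives $\Nm p_2(L_2)\otimes\sO_E(x)\simeq\delta_2$, and $\dim|L_2\otimes p_2^*\sO_E(x)|\geq 1$ then follows from the $p_2$-simplicity of $L_2$ via \cite[Prop.5.2.7]{Deb88} --- not via Lemma \ref{lemmatechnical}, whose hypotheses (effectivity and $p_1$-simplicity of $L_1'$) you would still have to verify. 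So the proposal is not a complete proof: it is missing the Brill--Noether input about bielliptic curves that makes the bidegree shift possible.
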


\begin{proof}
We claim that the following holds:
let $L_1 \in W^1_{g(C_1)} C_1$ be a general point. Then $L_1$ is not simple, 
and there exists a point $x \in E$ such that  
$$
L_1 \simeq p_1^*\sO_E(x) \otimes \sO_{C_1}(D_1)
$$ 
with $D_1$ an effective divisor such that $\sO_E(\Nm p_1(D_1)+x) \simeq \delta_1$. 
Assume this for the time being, let us show how to conclude: let $L \in  \pull(W_1)$ 
be a general point. Then we have $L \simeq \pi_1^* L_1 \otimes \pi_2^* L_2$ with $L_1 \in W^1_{g(C_1)} C_1$ a general point
and $L_2$ a $p_2$-simple line bundle. Thus by the claim we can write
$$
L \simeq \pi_1^*  \sO_{C_1}(D_1) \otimes \pi_2^* (L_2 \otimes p_2^* \sO_E(x)).
$$
Since $\sO_E(\Nm p_1(D_1)+x) \simeq \delta_1$ and  $\delta \simeq \delta_1 \otimes \delta_2$ a short computation 
shows that $\Nm p_2(L_2) \otimes \sO_E(x) \simeq \delta_2$.
Moreover $L_2$ is $p_2$-simple, so by \cite[Prop.5.2.7]{Deb88} we obtain that $\dim |L_2 \otimes p_2^* \sO_E(x)| \geq 1$.
Thus $L$ is in $\pull(W_{-1})$.
This shows one inclusion, the proof of the other inclusion is analogous.

{\em Proof of the claim.} Set
\[
S := 
\{
(x, D_1) \in E\ \times C_1^{(g(C_1)-2)} \ | \ x+\Nm p_1(D_1) \in |\delta_1|
\}\footnote{For $g(C_1)=2$, the symmetric product $C_1^{(g(C_1)-2)}$ is a point: it corresponds to the zero divisor on $C_1$.}.
\]
Note that the projection $p_2: S \rightarrow C_1^{(g(C_1)-2)}$ on the second factor is an isomorphism,
so $S$ is not uniruled.  
For $(x, D_1) \in S$ general the divisor $D_1$ is $p_1$-simple by Lemma \ref{lemmaspecialoverelliptic}, 
so by \cite[p.338]{Mum71} we have an exact sequence
\[
0 \rightarrow \sO_E(x) \rightarrow (p_1)_* \sO_{C_1}(p_1^* x+D_1) \rightarrow \sO_E(x+\Nm p_1(D_1)) \otimes \delta_1^* \rightarrow 0.
\]
By construction we have $\sO_E(x+\Nm p_1(D_1)) \otimes \delta_1^* \simeq \sO_E$. Thus $H^1(E, \sO_E(x))=0$ implies
that $h^0(C_1, \sO_{C_1}(p_1^* x+D_1))=2$.
Hence the image of
\[
\holom{\tau}{S}{\pic C_1}, \  (x,D_1) \mapsto \sO_{C_1}(p_1^* x+D_1)
\]
is contained in $W^1_{g(C_1)} C_1$. 
Since $S$ is not uniruled the general fibre
of $S \rightarrow \tau(S)$ has dimension zero.
By Riemann-Roch the residual map 
$W^1_{g(C_1)} C_1 \rightarrow W^0_{g(C_1)-2} C_1$ is an isomorphism, 
so $W^1_{g(C_1)} C_1$ is irreducible of dimension $g(C_1)-2$.
Thus $\tau$ is surjective.
\end{proof}

Suppose that $g(C_1) \geq 2$.
Let $(J C_1, \Theta_{C_1})$ and $(J C_2, \Theta_{C_2})$ be the Jacobians of the curves $C_1$ and $C_2$
with their natural principal polarisations. Since $p_1$ and $p_2$ are ramified, the pull-backs
\holom{\pi_1^*}{JE}{JC_1} and \holom{\pi_2^*}{JE}{JC_2} are injective 
and the restricted polarisations  $B_1:=\Theta_{C_1}|_{JE}$ and $B_2:=\Theta_{C_2}|_{JE}$
are of type $(2)$ \cite[Ch.3]{Mum71}.
We define
\[
P_1 := \ker(\holom{\Nm p_1}{J C_1}{J E}), \ P_2:= \ker(\holom{\Nm p_2}{J C_2}{J E}).
\]
We set $A_1:=\Theta_{C_1}|_{P_1}$ and $A_2:=\Theta_{C_2}|_{P_2}$, then the polarisations $A_1$ and $A_2$ are of type
$(1, \ldots, 1, 2)$ \cite[Cor.12.1.5]{BL04}. 

If  $\holom{p_j^* \times i_{P_j}}{JE \times P_j}{JC_j}$
denotes the natural isogeny, then $(p_j^* \times i_{P_j})^* \Theta_{C_j} \equiv B_j \boxtimes A_j$.
Thus if \holom{\alpha_j}{JC_j}{JE \times \widehat{P_j}} is the dual map, one has \cite[Prop.14.4.4]{BL04} 
\begin{equation} \label{decompositionthetac}
\Theta_{C_j}^{\otimes 2} \equiv \alpha_j^* (\widehat{B_j} \boxtimes \widehat{A_j}),
\end{equation}
where $\widehat{B_j}$ and $\widehat{A_j}$ are the dual polarisations. We note that $\widehat{A_j}$ has type 
$(1, 2, \ldots, 2)$.

By \cite[Prop.5.5.1]{Deb88} the pull-back maps $P_1$ and $P_2$ into the Prym variety $P$ and we obtain an isogeny
\holom{\pull|_{P_1 \times P_2}}{P_1 \times P_2}{P} such that
\[
\pull|_{P_1 \times P_2}^* \Theta \equiv A_1 \boxtimes A_2.
\]
In particular if \holom{g}{P}{\widehat{P_1 \times P_2}} denotes the dual map, 
one has 
\begin{equation} \label{decompositiontheta}
\Theta^{\otimes 2} \equiv g^* (\widehat{A_1} \boxtimes \widehat{A_2}).
\end{equation}

\begin{proposition} \label{propositiongtwo}
If $g(C_1) \geq 3$, the cohomology classes of $\pull(Z_{-3})$, $\pull(W_{1})$, $\pull(Z_{3})$
are not minimal. Moreover their cohomology classes are distinct, so they are distinct irreducible
components of $V^2$.

If $g(C_1)=2$ the same holds for $\pull(W_{1})$ and $\pull(Z_{3})$.
\end{proposition}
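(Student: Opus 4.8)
The plan is to compute the three classes after pulling them back along the isogeny $\psi:=\pull|_{P_1\times P_2}\colon P_1\times P_2\to P$ of the preceding paragraph, and to keep track of which summands of the K\"unneth decomposition $H^{\bullet}(P_1\times P_2,\bQ)=\bigoplus_{i,j}H^{i}(P_1,\bQ)\otimes H^{j}(P_2,\bQ)$ the results occupy. Since $\psi$ is an isogeny, $\psi^{\ast}$ is a bigraded isomorphism of $\bQ$--cohomology algebras and $\psi^{\ast}[X]=[\psi^{-1}(X)]$ for a subvariety $X\subset P$ (after a harmless translation). From $\psi^{\ast}\Theta\equiv A_1\boxtimes A_2$ one gets that the minimal cohomology class of a $(g(C)-4)$-dimensional subvariety of $P$, i.e.\ $[\Theta^{3}/3!]$ in codimension $3$, pulls back to
\[
\sum_{i+j=3}\frac{A_1^{i}}{i!}\boxtimes\frac{A_2^{j}}{j!}.
\]
Because $A_k$ is ample on the $(g(C_k)-1)$-dimensional abelian variety $P_k$ and $g(C_2)=g(C)+1-g(C_1)\ge 4$, the summands with $i=0$ and with $i=1$ are non-zero whenever $g(C_1)\ge 2$, and the summand with $i=3$ is non-zero as soon as $g(C_1)\ge 4$; so the pulled-back minimal class has non-zero components in $H^{0}(P_1)\otimes H^{6}(P_2)$ and in $H^{4}(P_1)\otimes H^{2}(P_2)$ always, and in $H^{6}(P_1)\otimes H^{0}(P_2)$ when $g(C_1)\ge 4$.

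Next I would treat $\pull(Z_{\pm 3})$. In $Z_{3}$ the first entry is unconstrained (it runs over all of $\picd^{g(C_1)+2}C_1$), so for a fixed admissible $L_2$ the bundle $\pi_1^{\ast}L_1\otimes\pi_2^{\ast}L_2$ sweeps out a whole translate of the abelian subvariety $\pi_1^{\ast}P_1\subset P$; using that $\pi_1^{\ast}P_1$ and $\pi_2^{\ast}P_2$ are complementary abelian subvarieties of $P$ (this is what it means that $\psi$ is an isogeny), one checks that $\psi^{-1}(\pull(Z_3))$ is, up to finitely many translates, a product $P_1\times Y$ with $\dim Y=g(C_2)-4$, so $[\psi^{-1}(\pull(Z_3))]$ is a non-zero class supported purely in $H^{0}(P_1)\otimes H^{6}(P_2)$. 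Symmetrically, for $g(C_1)\ge 4$ the class $[\psi^{-1}(\pull(Z_{-3}))]$ is non-zero and supported in $H^{6}(P_1)\otimes H^{0}(P_2)$ (for $g(C_1)=3$ one simply has $Z_{-3}=\emptyset$). Comparing with the first paragraph, neither $[\pull(Z_3)]$ (which misses the $H^{4}(P_1)\otimes H^{2}(P_2)$ component) nor $[\pull(Z_{-3})]$ (which misses the $H^{0}(P_1)\otimes H^{6}(P_2)$ component) equals the minimal class.

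For $W_1$ the point is that \emph{both} entries lie in proper Brill--Noether loci, $L_1\in W^1_{g(C_1)}C_1$ and $L_2\in W^0_{g(C_2)-2}C_2$. Writing $(\ell_1,\ell_2)\in\psi^{-1}(\pull(W_1))$ with $\pi_1^{\ast}\ell_1\otimes\pi_2^{\ast}\ell_2=\pi_1^{\ast}L_1\otimes\pi_2^{\ast}L_2$ for a pair $(L_1,L_2)\in W_1$, the element $\pi_1^{\ast}(\ell_1\otimes L_1^{-1})$ lies in $\pi_1^{\ast}JC_1\cap\pi_2^{\ast}JC_2$, which has dimension $\le 1$ (it contains $(p\circ\pi)^{\ast}JE$ with finite index); imposing $\ell_1\in\ker(\Nm p_1)$ then pins down $\ell_1$ from $L_1$ up to a finite set. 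Hence $\mathrm{pr}_1(\psi^{-1}(\pull(W_1)))$ is contained in a translate of the image of $W^1_{g(C_1)}C_1$, so it has dimension $\le g(C_1)-2<\dim P_1$; and, using $\pull(W_1)=\pull(W_{-1})$ from Lemma~\ref{lemmaidentify}, the same argument gives $\dim\mathrm{pr}_2(\psi^{-1}(\pull(W_1)))\le g(C_2)-2<\dim P_2$. Since $\psi^{-1}(\pull(W_1))$ maps into proper subvarieties of both factors, intersecting its class with a generic translate of $\{p\}\times P_2$ (resp.\ of $P_1\times\{q\}$) yields the empty set, so the K\"unneth components of $[\psi^{-1}(\pull(W_1))]$ in $H^{0}(P_1)\otimes H^{6}(P_2)$ and in $H^{6}(P_1)\otimes H^{0}(P_2)$ both vanish; the class is non-zero because $\pull(W_1)$ is a non-empty $(g(C)-4)$-dimensional subvariety of $P$. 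In particular $[\pull(W_1)]$ is not minimal (it misses $H^{0}(P_1)\otimes H^{6}(P_2)$), and $[\pull(Z_3)]$, $[\pull(W_1)]$, $[\pull(Z_{-3})]$ are three non-zero classes with pairwise disjoint K\"unneth supports, hence pairwise distinct. As these three subvarieties are irreducible of dimension $g(C)-4$ (Lemma~\ref{lemmaza} and the discussion before Proposition~\ref{propositioncomponents}) and their union is $V^2$ (Proposition~\ref{propositioncomponents} together with Lemma~\ref{lemmaidentify}), they are distinct irreducible components of $V^2$. When $g(C_1)=2$ one has $\dim P_1=1$ and $Z_{-3}=\emptyset$, the relevant non-zero components of the pulled-back minimal class are those in $H^{0}(P_1)\otimes H^{6}(P_2)$ and $H^{2}(P_1)\otimes H^{4}(P_2)$, and the same comparison applies to $\pull(W_1)$ and $\pull(Z_3)$.

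The main obstacle I expect is not the K\"unneth bookkeeping but the structural input feeding it: verifying that $\psi^{-1}(\pull(Z_{\pm 3}))$ is, up to finite data, a genuine product with one full factor $P_1$ or $P_2$, and --- the delicate point --- that $\psi^{-1}(\pull(W_1))$ projects to proper subvarieties of \emph{both} $P_1$ and $P_2$. The latter is exactly where one must exploit that in $W_1$ neither Picard factor is the full $\picd^{d}C_i$ (in contrast to $Z_{\pm 3}$), via the bound $\dim(\pi_1^{\ast}JC_1\cap\pi_2^{\ast}JC_2)\le 1$ together with the norm condition $\ell_1\in\ker(\Nm p_1)$, which jointly collapse the otherwise $1$-dimensional indeterminacy. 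Once those facts are in place, the comparison with $\psi^{\ast}[\Theta^{3}/3!]$ and the identification of the irreducible components of $V^2$ are routine.
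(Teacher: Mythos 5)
Your argument is correct, and it reaches the conclusion by a route that is recognisably the same in spirit as the paper's --- both proofs exploit the isogeny between $P_1\times P_2$ and $P$ and the resulting product structure on cohomology --- but the execution differs in two genuine ways. The paper works on the dual side: it uses the dual map $\holom{g}{P}{\widehat{P_1\times P_2}}$ and the dual polarisations $\widehat{A_j}$, shows that $q(Z_{\pm 3})$ is literally a product $\widehat{P_1}\times q_2(W^0_{g(C_2)-4})$ (resp.\ the symmetric one), identifies $[\pull(Z_{\pm3})]$ as explicit multiples of $g^*\widehat{A_1}^3$ and $g^*\widehat{A_2}^3$, and then obtains $[\pull(W_1)]$ \emph{by subtraction} from $[V^2]$ using $\Theta^{\otimes 2}\equiv g^*(\widehat{A_1}\boxtimes\widehat{A_2})$. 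You instead pull back along $\psi=\pull|_{P_1\times P_2}$ and only track K\"unneth supports, and --- the real divergence --- you treat $W_1$ \emph{directly}, showing both projections of $\psi^{-1}(\pull(W_1))$ are proper subvarieties so that the two extreme K\"unneth components vanish. Your structural inputs are sound: $\psi^{-1}(\pull(Z_3))$ is invariant under translation by $P_1\times\{0\}$ (the norm condition and effectivity are preserved when $L_1$ is twisted by $\ker\Nm p_1$), hence is an honest product $P_1\times Y$; and your bound $\dim(\pi_1^*JC_1\cap\pi_2^*JC_2)\le 1$ together with the norm constraint does pin down $\ell_1$ from $L_1$ up to a finite set. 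What each approach buys: the paper's computation is sharper (it yields the explicit classes recorded in Remark \ref{remarkclasses}, which are needed to see that the components are not minimal in the stronger ``not an integral multiple'' sense), while yours is more qualitative and self-contained, avoiding dual abelian varieties and the decomposition of $\Theta_{C_j}^{\otimes 2}$ altogether. Two small points: there is an indexing slip where you claim the pulled-back minimal class has a non-zero component in $H^4(P_1)\otimes H^2(P_2)$ ``always'' --- the $i=1$ summand lives in $H^2(P_1)\otimes H^4(P_2)$, and the $H^4\otimes H^2$ component vanishes for $g(C_1)=2$; your closing paragraph gets this right, and the argument only needs \emph{some} non-extreme component of the minimal class to be non-zero, so nothing breaks. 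Also note that, like the paper, you should read ``minimal'' as ``a multiple of $\Theta^3/3!$'', and your K\"unneth-support argument does rule out all non-zero multiples.
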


\begin{proof}
In order to simplify the notation we denote the pull-back of the polarisations 
$\widehat{A_1}$ and $\widehat{A_2}$ to $\widehat{P_1 \times P_2}$ by the same letter.

We start by observing that is sufficient to show that $[\pull(Z_{-3})]$ (resp. $[\pull(Z_{-3})]$)
is a non-negative multiple of $g^* \widehat{A_1}^3$ ($g^* \widehat{A_2}^3$).  
Indeed once we have shown this property we can use that 
$$
[\pull(Z_{-3})] + [\pull(Z_{3})] + [\pull(W_1)] = [V^2]=\frac{\Theta^3}{3!}
$$
and Formula \eqref{decompositiontheta} to compute that
\[
 [\pull(W_1)] = \frac{1}{3! 2^3} 
 \left[
 (1-a_1) g^* \widehat{A_1}^3 + 3 g^* \widehat{A_1}^2 \widehat{A_2} + 3 g^* \widehat{A_1} \widehat{A_2}^2 + (1-a_2) g^* \widehat{A_2}^3
 \right],
\]
where $a_1, a_2 \geq 0$ correspond to the cohomology class of $Z_{\pm 3}$.
It is clear that none of these classes is (a multiple of) a minimal cohomology class.  
If $g(C_1) \geq 4$ all the classes are non-zero and distinct, 
so the images of $Z_{\pm 3}$ and $W_1$ are distinct irreducible components of $V^2$.
If $2 \leq g(C_1) \leq 3$ the set $Z_{-3}$ is empty (and the corresponding class zero), so
we obtain only two irreducible components.

{\em Computation of the cohomology class of $\pull(Z_{\pm 3})$.}
We will prove the claim for $Z_3$, the proof for $Z_{-3}$ is analogous.
We have a commutative diagram
\[
\xymatrix{
P \ar @{^{(}->}[r]^{i_P} & J \upC  \ar[r]^{\simeq} & \widehat{J \upC} \ar[r]^{\widehat{i_P}} \ar[d]_{\widehat{\pull}} & \widehat{P} \simeq P \ar[d]_{g}
\\
& JC_1 \times JC_2 \ar[r]^{\simeq}  \ar[u]^{\pull} & \widehat{JC_1 \times JC_2} \ar[r]^q & \widehat{P_1 \times P_2} 
},
\]
so if $X \subset JC_1 \times JC_2$ is a subvariety such that $\pull(X) \subset P$, its cohomology class is determined (up to a multiple) by the class of $q(X)$ in $\widehat{P_1 \times P_2}$.

We choose a translate of $Z_3$ that is in $JC_1 \times JC_2$ and denote it by the same letter.
We want to understand the geometry of $q(Z_3)$. 
Since the norm maps $\Nm p_j$ are dual to the pull-backs $p_j^*$ \cite[Ch.1]{Mum71}, the map $q$ fits into an exact sequence of abelian varieties
\begin{equation} \label{mapq}
0 \rightarrow JE \times JE \stackrel{p_1^* \times p_2^*}{\rightarrow} JC_1 \times JC_2 
\stackrel{q}{\rightarrow} \widehat{P_1 \times P_2}  \rightarrow 0
\end{equation}
Recall from the proof of Lemma \ref{lemmaza} that $Z_3$ is a fibre space over $W^0_{g(C_1)-4}$ such that for given $L_2 \in W^0_{g(C_2)-4}$, the fibre identifies to the fibre of $\Nm p_1:\picd^{g(C_1)+2} C_1 \rightarrow \picd^{g(C_1)+2} E$ over $\delta \otimes \Nm p_2(L_2^*)$. Thus $Z_3$ identifies to a fibre product 
$$
\picd^{g(C_1)+2} C_1 \times_{J E} W^0_{g(C_2)-4}.
$$ 
Together with the exact sequence \eqref{mapq} this shows that 
\[
q(Z_3) = \widehat{P_1} \times q_2(W^0_{g(C_2)-4}),
\]
where \holom{q_2}{JC_2}{P_2} is the restriction of $q$ to $JC_2$.

Thus we are left to compute the cohomology class of $q_2(W^0_{g(C_2)-4})$: 
note first that $q_2$ is the composition of the isogeny
\holom{\alpha_2}{JC_2}{JE \times \widehat{P_2}} with the projection on $\widehat{P_2}$. 
Since the polarization $\widehat{B_2}$ is numerically equivalent to a multiple of 
$e \times \widehat{P_2} \subset JE \times \widehat{P_2}$ and the cohomology class of $W^0_{g(C_2)-4}$
is $\frac{\Theta_{C_2}^4}{4!}$, one deduces from 
Equation \eqref{decompositionthetac} that the cohomology class of $q_2(W^0_{g(C_2)-4})$
is a multiple of $\widehat{A_2}^3$.
\end{proof}

\begin{remark} \label{remarkclasses}
With some more effort one can show the following statement:
If $g(C_1) \geq 2$, then the following equalities in $H^6(P, \Z)$ hold:
\begin{eqnarray}
\label{one}
[\pull(Z_{-3})] &=& \frac{1}{4!}  g^* p_{\widehat{P_1}}^* \widehat{A_1}^3,
\\
\label{two}
[\pull(Z_{3})] &=& \frac{1}{4!}  g^* p_{\widehat{P_2}}^*  \widehat{A_2}^3,
\\
\label{three}
[\pull(W_1)] &=& \frac{1}{8}  g^* (p_{\widehat{P_1}}^* \widehat{A_1}^2 p_{\widehat{P_2}}^* \widehat{A_2}
+p_{\widehat{P_1}}^* \widehat{A_1} p_{\widehat{P_2}}^* \widehat{A_2}^2),
\end{eqnarray}

The polarisation $\widehat{A_j}$ being of type $(1, 2, \ldots, 2)$ it follows from \cite[Thm.4.10.4]{BL04}
that $\frac{1}{4!} \widehat{A_j}^3$ is a ``minimal'' cohomology class for $(P_j, \widehat{A_j})$, i.e.
it is in $H^6(P_j, \Z)$ and not divisible.  
\end{remark}

\begin{remark} \label{remarkbielliptic}
Let $\mathcal R_{g(C)}$ be the moduli space of pairs $(C, \pi)$ where $C$ is a smooth projective curves of genus $g(C)$
and $\holom{\pi}{\upC}{C}$ an \'etale double cover. 
We denote by
\[
\holom{\mbox{Pr}}{\mathcal R_{g(C)}}{\mathcal A_{g(C)-1}}
\]
the Prym map associating to $(C, \pi)$ the principally polarised Prym variety $(P, \Theta)$.

Let $\mathcal B_{g(C)}$ be the moduli space of bielliptic curves of genus $g(C) \geq 6$,
and let $\mathcal R_{\mathcal B_{g(C)}} \subset \mathcal R_{g(C)}$ be the moduli space of \'etale double covers 
over them. 
Let $\mathcal R_{\mathcal B_{g(C), g(C_1)}}$ be those \'etale double covers
such that $\upC \rightarrow C \rightarrow E$ has Galois group $\Z_2 \times \Z_2$
and the curve $C_1$ has genus $g(C_1)$.

By \cite[Thm.4.1.i)]{Deb88} the closure of 
$\mbox{Pr}(\mathcal R_{\mathcal B_{g(C), 1}})$ in $\mathcal A_{g(C)-1}$ contains the locus 
of Jacobians of hyperelliptic curves of genus $g(C)-1$.  
A general hyperelliptic Jacobian has the property that 
the cohomology class of every subvariety is an integral multiple of the 
minimal class \cite{Bis97}. 
Hence the same property holds for a general element in $\mbox{Pr}(\mathcal R_{\mathcal B_{g(C), 1}})$ 
Thus if $V^2$ was reducible, the irreducible components would have minimal cohomology class.
\end{remark}

\section{Prym varieties of bielliptic curves II}

\subsection{Tetragonal construction and $V^2$}
\label{subsectiontetragonal}

We denote by $C$ an irreducible nodal curve of arithmetic genus
$p_a(C) \geq 6$, and by \holom{\pi}{\upC}{C} a Beauville admissible cover. 
By \cite{Bea77} the corresponding Prym variety $(P, \Theta)$
is a \ppav.
We will suppose that $C$ is a tetragonal curve, i.e. there exists
a finite morphism \holom{f}{C}{\PP^1} of degree four,
but not hyperelliptic, trigonal or a plane quintic. 
We set $H := f^* \sO_{\PP^1}(1)$.
By Donagi's tetragonal construction \cite{Don81}, \cite[Ch.12.8]{BL04}
the corresponding special subvarieties give Beauville admissible  covers
$\upC' \rightarrow C'$ and $\upC'' \rightarrow C''$
such that  $C'$ and $C''$ are tetragonal and the Prym varieties are 
isomorphic to $(P, \Theta)$.

Consider now the residual line bundle $K_C \otimes H^*$: by Riemann-Roch
the linear series $|K_C \otimes H^*|$ is a $g^{p_a(C)-4}_{2p_a(C)-6}$
to which we can apply the construction of special subvarieties (cf. Section \ref{subsectionspecial}).
If $S \subset \Lambda$ is a connected component, then by \cite[Thm.1, Rque.4]{Bea82b} 
the cohomology class of $V:=i_{\upC}(S)$ is $[2 \frac{\Theta^3}{3!}]$.
Denote by $(P^+, \Theta^+)$ the canonically polarised Prym variety, i.e.
\[
\Theta^+ = \{ L\in \fibre{(\Nm \pi)}{K_C} \: | \: |L| \neq \emptyset, \dim |L|\equiv 0 \mod 2 \}.
\]
Up to exchanging $\upC'$ and $\upC''$ we can suppose that the image of the natural map
\[
V \times \upC' \rightarrow J \upC
\]
is contained in $P^+$. By construction the image is then contained in $\Theta^+$, hence
a translate of $-V$ is contained in the theta-dual $T(\upC')$. 
Since $T(\upC')$ equals the Brill-Noether locus $(V^2)'$ of the covering $\upC' \rightarrow C'$,
it has cohomology class $[2 \frac{\Theta^3}{3!}]$ and the inclusion is a (set-theoretical) equality.
Thus the special subvariety $V$ is isomorphic to the Brill-Noether locus $(V_2)'$ 
of a tetragonally-related covering.

Suppose now that the base locus of the linear system $|K_C \otimes H^*|$ does not
contain any points of $C_{\sing}$. Since $C$ is not a plane quintic, this
implies that $|K_C \otimes H^*|$ is base-point free.
By \cite[\S 2,Cor.]{Bea82b}, applied to the pull-back of the linear system to the normalised curve, we know that $S$ is irreducible if the linear system
$|K_C \otimes H^*|$ induces a map $\holom{\varphi}{C}{\PP^{p_a(C)-4}}$
that is birational onto its image $f(C)$.
Suppose now that this is not the case: then there exists for every generic point
$p \in C$ another generic point $q \in C$ such that
\[
h^0(C, K_C \otimes H^* \otimes  \sO_C(-p-q)) = h^0(C, K_C \otimes H^* \otimes  \sO_C(-p)).
\] 
By Riemann-Roch this implies that the linear system $|H \otimes  \sO_C(p+q)|$
is a base-point free $g^2_6$. Since $C$ is not hyperelliptic, we obtain in this way
a one-dimensional subset $W \subset W^2_6 C$.
Let $C'$ be the image of the morphism \holom{\varphi_{|H \otimes  \sO_C(p+q)|}}{C}{\PP^2}. 
Since $C$ is irreducible and not trigonal, the curve $C'$ is an irreducible cubic or sextic curve. 
Let \holom{\nu}{\upC}{C} be the normalisation, then we distinguish two cases:

{\em Case 1: $\upC$ is hyperelliptic.} 
Denote by $\holom{h}{\upC}{\PP^1}$ the hyperelliptic covering.
Since $H$ and $H \otimes  \sO_C(p+q)$ 
are base-point free, it is easy to see that $|\nu^*(H \otimes  \sO_C(p+q))|$ is a $g^3_6$.
Thus we have $\nu^*(H \otimes  \sO_C(p+q)) \simeq h^* \sO_{\PP^1}(3)$ and a factorisation
\[
\xymatrix{
T \ar[d]_h \ar[r]^{\nu} & C  \ar[d]^{\varphi_{|H \otimes  \sO_C(p+q)|}}
\\
\PP^1 \ar @{.>}[r]^{\bar \nu} & C' 
}
\]
In particular $\varphi_{|H \otimes  \sO_C(p+q)|}$ is not birational and $C'$ is a singular cubic.
Moreover if $x_1, x_2 \in \upC$ such that $\nu(x_1)=\nu(x_2)$
then $h(x_1)=h(x_2)$, unless $C'$ is nodal and $h(x_1)$ and $h(x_2)$ are mapped onto the unique node.

{\em Case 2: $\upC$ is not hyperelliptic.} 
In this case the pull-backs $\nu^*(H \otimes  \sO_C(p+q))$ define a one-dimensional subset
$\tilde W \subset W^2_6 \tilde C$. It follows by \cite[p.198]{ACGH85} that $\upC$ is bielliptic,
and if $\holom{h}{\upC}{E}$ is a two-to-one map onto an elliptic curve $E$, then
$\nu^*(H \otimes  \sO_C(p+q)) \simeq h^* L$ where $L \in \picd^3 E$.
As in the first case we have a factorisation $\holom{\bar \nu}{E}{C'}$ which is easily seen to
be an isomorphism. In particular $C$ is obtained from $\upC$ by identifying points
that are in a $h$-fibre.

\subsection{The irreducible components of $V^2$}
\label{subsectionz2}

Let $C'$ be a smooth curve of genus $g(C) \geq 6$ that is bielliptic, i.e.
we have a double cover \holom{p'}{C'}{E} onto an elliptic curve $E$.
As usual \holom{\pi'}{\upC'}{C'} will be an \'etale double cover, and we will
suppose that the morphism \holom{p' \circ \pi'}{\upC'}{E} is not 
Galois (in the terminology of \cite{Deb88, Nar92} the covering belongs to the 
family $\mathcal R'_{\mathcal B_{g(C)}} \subset \mathcal R_{g(C)}$,
cf. Remark \ref{remarkbielliptic}).

If we apply the tetragonal construction to a general $g^1_4$ on $C$, we
obtain a Beauville admissible cover \holom{\pi}{\upC}{C}
such that the normalisation $\holom{\nu}{T}{C}$ 
is a smooth hyperelliptic curve $T$ of genus $g(C)-2$.
Denote by \holom{h}{T}{\PP^1} the hyperelliptic structure.
Then $\nu$ identifies two pairs of points $x_1, x_2$
and $y_1, y_2$ such that $h(x_1), h(x_2), h(y_1), h(y_2)$ are four distinct points 
in $\PP^1$ (this follows from the \lq figure locale\rq \ in \cite[7.2.4]{Deb88}). 

By \cite[Ch.15]{Nar92} a tetragonal structure on $C$ can be constructed as follows:
there exists a unique double cover \holom{j}{\PP^1}{\PP^1}
sending each pair $h(x_1), h(x_2)$ and $h(y_1), h(y_2)$ onto a single point. 
The four-to-one covering \holom{j \circ h}{T}{\PP^1} factors through
the normalisation $\nu$, so we have a four-to-one cover
\holom{f}{C}{\PP^1}. 
If we apply the tetragonal construction to $H := f^* \sO_{\PP^1}(1)$,
we recover the original \'etale double cover \holom{\pi'}{\upC'}{C'}.
By Section \ref{subsectiontetragonal} we know that the
Brill-Noether locus $V^2$ associated to $\pi'$ is isomorphic to a special subvariety
associated to $|K_C \otimes H^*|$. 
By considering the exact sequence
\[
0 \rightarrow \nu_* (K_T \otimes \nu^* H^*) \rightarrow K_C \otimes H^* \rightarrow \C_{\nu(x_1)} \oplus \C_{\nu(y_1)}
\rightarrow 0
\]
one sees easily that the linear system $|K_C \otimes H^*|$ is base-point free 
(but does not separate the singular points $\nu(x_1)$ and $\nu(y_1)$).
Since the points $h(x_1), h(x_2), h(y_1), h(y_2)$ are distinct, it follows by the Case 1 in Section
\ref{subsectiontetragonal} that the special subvarieties are irreducible.
The following proposition summarises these considerations.

\begin{proposition} \label{propositionz2}
Let $C'$ be a smooth curve of genus $g(C) \geq 6$ that is bielliptic, i.e.
we have a double cover \holom{p'}{C'}{E} onto an elliptic curve $E$.
Let \holom{\pi'}{\upC'}{C'} be an \'etale double cover
such that the cover $\upC \rightarrow E$ is not Galois.
Then $V^2$ is irreducible.
\end{proposition}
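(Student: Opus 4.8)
The plan is to reduce the proposition, via Donagi's tetragonal construction, to the irreducibility criterion for special subvarieties of Case~1 in Section~\ref{subsectiontetragonal}. Concretely, I would exhibit a Beauville admissible cover $\holom{\pi}{\upC}{C}$ over a nodal curve $C$ that is tetragonally related to $\pi'$ and whose normalisation is smooth hyperelliptic, identify the Brill-Noether locus $V^2$ of $\pi'$ with a connected component of the special subvariety scheme attached to $|K_C \otimes H^*|$ for a suitable tetragonal structure $H$ on $C$, and then invoke the Case~1 analysis to see that this component is irreducible.

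To produce the tetragonal partner I would apply the tetragonal construction to $C'$ equipped with a general $g^1_4$ and the cover $\pi'$. Tracing through the \lq figure locale\rq\ of \cite[7.2.4]{Deb88}, the hypothesis that $\upC' \rightarrow E$ is not Galois forces one of the two resulting admissible covers $\holom{\pi}{\upC}{C}$ to have normalisation $\holom{\nu}{T}{C}$ a \emph{smooth hyperelliptic} curve of genus $g(C)-2$, with $\nu$ gluing exactly two pairs $x_1,x_2$ and $y_1,y_2$ whose images under $\holom{h}{T}{\PP^1}$ are four distinct points. I would then use the description of tetragonal pencils over hyperelliptic curves in \cite[Ch.15]{Nar92}: there is a unique double cover $\holom{j}{\PP^1}{\PP^1}$ collapsing $\{h(x_1),h(x_2)\}$ and $\{h(y_1),h(y_2)\}$, the composite $j \circ h$ descends along $\nu$ to a degree-four map $\holom{f}{C}{\PP^1}$, and applying the tetragonal construction to $H := f^*\sO_{\PP^1}(1)$ returns $\pi'$. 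By Section~\ref{subsectiontetragonal} this gives $V^2 \cong i_{\upC}(S)$ for a connected component $S$ of the scheme $\Lambda$ (Section~\ref{subsectionspecial}) of divisors on $\upC$ with norm in $|K_C \otimes H^*|$, so it remains to prove $S$ irreducible.

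For this I would first record that $|K_C \otimes H^*|$ is base-point free. Twisting the standard sequence $0 \rightarrow \nu_* K_T \rightarrow K_C \rightarrow \C_{\nu(x_1)} \oplus \C_{\nu(y_1)} \rightarrow 0$ by $H^*$ yields
\[
0 \rightarrow \nu_*(K_T \otimes \nu^* H^*) \rightarrow K_C \otimes H^* \rightarrow \C_{\nu(x_1)} \oplus \C_{\nu(y_1)} \rightarrow 0,
\]
and since $\nu^*H = h^*\sO_{\PP^1}(2)$ one computes $K_T \otimes \nu^* H^* = h^*\sO_{\PP^1}(g(C)-5)$, a positive multiple of the hyperelliptic pencil, hence base-point free on $T$ for $g(C) \geq 6$; reading off sections one finds that the only candidate base points of $|K_C \otimes H^*|$ lie over the two nodes, and a direct inspection rules them out, even though the linear system fails to separate $\nu(x_1)$ and $\nu(y_1)$. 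Because the hyperelliptic pencil on $T$ descends and the four points $h(x_i), h(y_i)$ are pairwise distinct, the map induced by $|K_C \otimes H^*|$ is exactly the non-birational situation of Case~1 in Section~\ref{subsectiontetragonal}, where the argument through \cite[\S2,Cor.]{Bea82b} applied to the normalisation shows that $S$---and therefore $V^2$---is irreducible.

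The main obstacle is the second step: making rigorous that the tetragonal construction applied to a general $g^1_4$ on the non-Galois bielliptic cover yields precisely a nodal curve $C$ of this degeneration type, and that the structure $H$ reconstructed from $(T,h)$ via \cite[Ch.15]{Nar92} is genuinely tetragonally related to $\pi'$. This is the combinatorial core and requires careful bookkeeping with the figure locale and the symmetry of the tetragonal construction; granting it, the base-point-freeness computation and the appeal to Case~1 are routine.
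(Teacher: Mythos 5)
Your proposal reproduces the paper's own proof essentially step for step: the tetragonal construction applied to a general $g^1_4$, the \lq figure locale\rq\ of [Deb88, 7.2.4] producing the nodal partner $C$ whose smooth hyperelliptic normalisation glues two pairs with four distinct images under $h$, the tetragonal structure $H$ from [Nar92, Ch.15] whose tetragonal construction returns $\pi'$, the identification of $V^2$ with a special subvariety of $|K_C \otimes H^*|$, base-point freeness via the same exact sequence, and the appeal to Case 1 of Section \ref{subsectiontetragonal}. The one phrase to correct is in the final step: the distinctness of $h(x_1), h(x_2), h(y_1), h(y_2)$ serves to \emph{rule out} the non-birational alternative of Case 1 (which would force identified points to have equal $h$-images), so that $\varphi_{|K_C \otimes H^*|}$ is birational onto its image and irreducibility follows from [Bea82b, \S 2, Cor.] --- you are not ``exactly in the non-birational situation'' as written.
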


\section{Proof of Theorem \ref{theoremirreduciblecomponents}}

If $V^2$ is reducible, we know by Corollary \ref{corollaryreducible}
that $C$ is trigonal, a plane quintic or bielliptic. The first two cases 
are settled in the Sections \ref{subsectionquintic} and \ref{subsectiontrigonal}.
If $C$ is bielliptic we distinguish two cases: the four-to-one cover $\upC \rightarrow C \rightarrow E$
is Galois or not. In the Galois case we conclude by Corollary \ref{corollarygone} 
and Proposition \ref{propositiongtwo}.
In the last case we use 
Proposition \ref{propositionz2}. q.e.d.


\end{document}